\numberwithin{equation}{section}
\newtheorem{theorem}{Theorem}[section]
\newtheorem{lemma}[theorem]{Lemma}
\newtheorem{proposition}[theorem]{Proposition}
\newtheorem{corollary}[theorem]{Corollary}
\theoremstyle{definition}
\newtheorem{remark}[theorem]{Remark}
\newtheorem{definition}[theorem]{Definition}
\newcommand{\LL}{M}
\newcommand{\e}{\varepsilon}
\newcommand{\R}{\mathbb{R}}
\newcommand{\RN}{{\mathbb{R}^N}}
\newcommand{\RD}{{\mathbb{R}^2}}
\newcommand{\N}{\mathbb{N}}
\newcommand{\beq }{\begin{equation}}
\newcommand{\eeq }{\end{equation}}
\DeclareMathOperator{\divv}{div}
\newcommand{\abs}[1]{\lvert{#1}\rvert}
\begin{document}

\title[Rigidity results for stationary 2D Euler equations]{Rigidity results for finite energy solutions to the stationary 2D Euler equations}

\author{Fabio De Regibus}
\address{ \vspace{-0.4cm}
	\newline 
	\textbf{{\small Fabio De Regibus}} 
	\vspace{0.15cm}
	\newline \indent  
	Departamento de An\'alisis Matem\'atico, Universidad de Granada, 18071 Granada, Spain}
\email{fabioderegibus@ugr.es}

\author{Francesco Esposito}
\address{ \vspace{-0.4cm}
	\newline 
	\textbf{{\small Francesco Esposito}} 
	\vspace{0.15cm}
	\newline \indent  
	Dipartimento di Matematica e Informatica, Università della Calabria, 87036 Rende (Cosenza), Italy}
\email{francesco.esposito@unical.it}

\author{David Ruiz}
\address{ \vspace{-0.4cm}
	\newline 
	\textbf{{\small David Ruiz}} 
	\vspace{0.15cm}
	\newline \indent  
	IMAG, Departamento de An\'alisis Matem\'atico, Universidad de Granada, 18071 Granada, Spain}
\email{daruiz@ugr.es}

\thanks{F.D.R. has been supported by:
	the Juan de la Cierva fellowship Grant JDC2022-048890-I funded by MICIU/AEI/10.13039/501100011033 and by the “European Union NextGeneration EU/PRTR” and he is member of INdAM. F. E. is partially supported by PRIN 2022 project P2022YFAJH {\em Linear and Nonlinear PDE's: New directions and Applications} and he is partially supported by INdAM - GNAMPA Project 2025 - CUP $E5324001950001$.
	 D. R. has been supported by:
	the Grant PID2021-122122NB-I00 funded by MCIN/AEI/ 10.13039/501100011033 and by
	``ERDF A way of making Europe'';
	the Research Group FQM-116 funded by J. Andalucia;
	the \emph{IMAG-Maria de Maeztu} Excellence Grant CEX2020-001105-M/AEI/10.13039/501100011033 funded by MICIN/AEI}

\keywords{Euler equations; circular flows; semilinear elliptic equations; continuous Steiner symmetrization.}

\subjclass[2020]{35J61; 35Q35; 76B99}

\begin{abstract}
	In this paper we prove rigidity results for classical solutions to the stationary 2D Euler equations in $\RD$. Assuming that the velocity field has finite energy and that the stagnation set is connected, we prove that the corresponding stream function solves an autonomous semilinear elliptic equation. Under some extra conditions on the vorticity near infinity we can also prove that the streamlines are concentric circles. The proofs include several energy estimates on the behavior of the stream function at infinity, as well as an adaptation of the continuous Steiner symmetrization to our setting.

\end{abstract}

\maketitle


\section{Introduction}

In recent years a lot of work has been devoted to the study of  solutions to the stationary Euler equations:
\begin{equation}
	\label{Eu}\tag{E}
	\begin{cases}
		v\cdot\nabla v=-\nabla p&\text{in }\RD,\\
		\divv v=0&\text{in }\RD.
	\end{cases}
\end{equation}
Here $v=(v_1,v_2)$ is the velocity vector field of an inviscid and incompressible fluid and $p$ is the pressure, which is determined up to a constant. When the problem is posed in a domain $\Omega \subseteq \R^2$, tangential boundary conditions are usually assumed, that is, $ v(x) \cdot \nu(x)=0$, where $\nu$ is the unit exterior normal vector to $\partial \Omega$.

Since $\divv v =0$, one can define the stream function $u(x)$ so that
\[
v = (\nabla u)^\perp,
\]
where $(x_1, x_2)^\perp =(- x_2, x_1)$. In this way the streamlines (or trajectories of the fluid) correspond to level curves of the stream function $u$. 

Another important object is the vorticity of the fluid, that in dimension 2 reduces to a scalar:
\[
\omega=\mathrm{curl}(v)=\partial_{x_1}v_2-\partial_{x_2}v_1 = \Delta u.
\]
It is well known that the Euler equations~\eqref{Eu} can be written equivalently as:
\begin{equation}
	\label{Eu2} \tag{E2}
	\begin{cases}
		\nabla B + \omega \, v^{\perp}=0&\text{in }\RD,\\
		\divv v=0&\text{in }\RD,
	\end{cases}
\end{equation}
where $B$ is the Bernoulli function defined as: 
\[
B= \frac 1 2 |v|^2 + p.
\]
In this paper we are concerned with rigidity results for classical solutions, that is, solutions $v \in \mathcal C^1$ satisfying~\eqref{Eu} pointwise. There are several works in the literature on this topic; below we present an overview of the ones more closely related to this paper. In~\cite{HN19} it is shown that, under the assumption:
\begin{equation} \label{HN}  
	0<c < |v(x) |  < C \quad \text{for all } x \in \R^2,
\end{equation}
then $v$ is a shear flow, that is, the streamlines are straight lines. The same result holds also in strips and half-spaces, see~\cite{HN17}. It is quite remarkable that these results are no longer true if~\eqref{HN} is relaxed to:
\[
	0< |v(x) |  < C \quad \text{for all } x \in \R^2,
\]
as has been recently shown in~\cite{DRR25}.	Other results in this line are~\cite{HK25, LWX23, LLSX23}.

In other circumstances one can prove that the flow is circular, that is, the streamlines are concentric circles. In this regard see~\cite{HN23, WZ23} where the problem is posed in annuli or disks: overdetermined cases are also considered there. In~\cite{GPSY21} the case of entire solutions with nonnegative vorticities is treated. See also~\cite{Ru23} for compactly supported solutions, and~\cite{CFZ24} for the problem in exterior domains.

On the other hand, another line of research on this topic is that of flexibility results. An important question here is the existence of solutions close to given ones: see for instance~\cite{CDG21, CZEW23} (see also Open Question 3 in the survey paper~\cite{DE23}). Solutions in the strip which are not shear flows have been found in~\cite{DRR25, GXX24}, whereas nonradial compactly supported solutions have been found in~\cite{GPJ21}, in the vortex patch setting, and in~\cite{EFR24}, with $\mathcal C^k$ regularity (in this regard, see also~\cite{EFRS25}).

\medskip 

Generally speaking, to obtain rigidity results one usually needs some control on the stagnation set:
\[
\mathbf S = \{x\in\RD:v(x)=0\}.
\]

The reason is that the behavior of $v$ around the stagnation points can be quite intricate. Observe for instance that~\eqref{HN} implies that $\mathbf S = \emptyset$. In other cases $\mathbf S$ is assumed to be a point (see~\cite{HN23}), or to have regular boundary (see~\cite{CFZ24, Ru23, WZ23}). In~\cite{DN2024} the authors give rigidity results on \emph{laminar} solutions on periodic channels and annuli, whereas in~\cite{GXX24} assumptions on the \emph{flow angles} are made: in both cases one has a control on the behavior of $v$ near the stagnation set. 
The general idea (that we also exploit in this paper) is to prove that the stream function satisfies a semilinear elliptic equation in the form:
\begin{equation} \label{semilinear}  
	- \Delta u = f(u). 
\end{equation}
In this way one can use techniques of nonlinear elliptic equations to prove a certain type of symmetry. It is worth pointing out that, in general, the stream function need not to be a solution of a problem like~\eqref{semilinear}, but the two questions are intimately related in the analytic case, as has been recently shown in~\cite{EHSX24}. 

In this paper we are concerned with entire solutions with \emph{bounded energy}, that is, $v\in L^2(\RD)$. This is a quite natural assumption since the energy is a physically relevant quantity in fluid mechanics; in particular, it is an invariant for classical solutions to the Euler equations. Nonetheless, to the best of our knowledge, this setting has not been considered in the literature on steady solutions to~\eqref{Eu}.

Our first result is as follows:
\begin{theorem}
	\label{main:thm0}
	Let $v\in L^2(\RD) \cap \mathcal{C}^1(\RD)$ be a solution of the Euler equations~\eqref{Eu} and let $u$ be the corresponding stream function. Then:
	\begin{enumerate}[label=(A\arabic*)]
		\item\label{A1} $v$ has at least a stagnation point, i.e.~the set $\mathbf S$ is not empty;
		\item\label{A2} if $\mathbf S \neq \R^2$ is connected, up to consider $-u$ and to sum a constant, there holds:
		\begin{enumerate}
			\item[i)] $\displaystyle u(x) =1 = \max_{\R^2} u\ $ for all $x \in \mathbf S$;
			\item[ii)] $\displaystyle\lim_{|x| \to +\infty} u(x)=\inf_{\R^2}u=L \in [-\infty, 1)$; 
			\item[iii)]	$u$ solves
			\[
			-\Delta u = f(u) \quad \text{in } \RD,
			\]
			for some $f \in \mathcal C^0((L,1])$.
		\end{enumerate}
	\end{enumerate}
\end{theorem}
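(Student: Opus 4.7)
The central device is a single consequence of finite energy. Since $v\in L^2(\R^2)$ and $|v|=|\nabla u|$, Fubini gives
\[
\int_0^\infty\!\!\int_{\partial B_r}|\nabla u|^2\,dS\,dr<\infty,
\]
so there is a sequence $R_n\to\infty$ with $R_n\int_{\partial B_{R_n}}|\nabla u|^2\,dS\to 0$. Cauchy--Schwarz then yields $\int_{\partial B_{R_n}}|\nabla u|\,dS\to 0$, and hence the oscillation of $u$ on $\partial B_{R_n}$ tends to $0$; write $\alpha_n$ for its (approximate) value. Combined with the fact that the critical set of $u$ is exactly $\mathbf S$, so that interior extrema of $u$ on bounded subdomains can only take values in $u(\mathbf S)$, this device will drive the entire argument.

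For \ref{A1}, assume for contradiction $\mathbf S=\emptyset$. Then $u$ has no critical point, so on each $\overline{B_{R_n}}$ the extrema of $u$ lie on $\partial B_{R_n}$, giving $u(B_{R_n})\subseteq[\alpha_n-o(1),\alpha_n+o(1)]$; letting $n\to\infty$ forces $u$ constant, hence $v\equiv 0$, a contradiction. For \ref{A2}, since $u\in C^2$, Morse--Sard gives that $u(\mathbf S)$ has Lebesgue measure zero in $\R$; being also connected (as the continuous image of the connected set $\mathbf S$) it reduces to a single point, which I normalize to $1$. Applying the ball argument once more, the extrema of $u$ on $\overline{B_{R_n}}$ either equal $1$ (attained inside, on $\mathbf S$) or $\alpha_n+o(1)$ (attained on $\partial B_{R_n}$). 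If $\{u>1\}$ and $\{u<1\}$ were both nonempty, picking witnesses $x_\pm$ and applying this to $\max$ and $\min$ on $\overline{B_{R_n}}$ would force $\alpha_n>1$ and $\alpha_n<1$ simultaneously for all large $n$, which is impossible. Hence one of them is empty, and after the flip $u\mapsto 2-u$ we may assume $u\leq 1=\max u$, proving~(i). The same idea shows $\mathbf S$ is bounded: otherwise every $\partial B_{R_n}$ eventually meets $\mathbf S$, which forces $\alpha_n\to 1$ and then $u\geq 1-o(1)$ on $B_{R_n}$, so $u\equiv 1$, contradicting $\mathbf S\neq\R^2$. Once $\mathbf S$ is compact, repeating the analysis on annuli $B_{R_m}\setminus\overline{B_{R_n}}$ disjoint from $\mathbf S$ (where $u$ has no critical point) yields first $\alpha_n\to L:=\inf_{\R^2}u<1$ and then, by sandwiching $u(x)$ between $\alpha_n$ and $\alpha_m$ for any $x$ in such an annulus, the uniform limit $u(x)\to L$ as $|x|\to\infty$, proving~(ii).

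For~(iii), from \eqref{Eu2} together with $v^\perp=-\nabla u$ the Bernoulli relation reads $\nabla B=\omega\,\nabla u$ on $\R^2$, so $B$ is constant along each connected component of each level set of $u$ inside $\R^2\setminus\mathbf S$. To globalize this to $B=F(u)$, I would show that for every regular value $c\in(L,1)$ the superlevel set $\{u>c\}$ is bounded and simply connected: boundedness uses $u\to L$ at infinity, and any bounded connected component of $\{u\leq c\}$ is ruled out by the max-principle argument (it would force $u\equiv c$ on a $2$-dimensional set, contradicting that $c$ is a regular value). The smooth boundary $\{u=c\}$ is then a single Jordan curve on which $B$ is constant, so $F(c):=B|_{\{u=c\}}$ is well defined, and the identity $F'(c)=\omega|_{\{u=c\}}$ shows that $F'$ depends continuously on $c$; extending $F'$ continuously to $c=1$ (using continuity of $\omega$ and the fact that $\omega\equiv 0$ on any interior of $\mathbf S$ forced by $u\equiv 1$ there) gives $f:=-F'\in C^0((L,1])$ with $-\Delta u=f(u)$ on $\R^2$.

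The delicate steps I expect are the upgrade of the subsequential convergence $\alpha_n\to L$ to the uniform limit $u(x)\to L$ as $|x|\to\infty$, and the topological claim that $\{u>c\}$ is simply connected for every regular value $c$; without the latter the Bernoulli function could take different values on disjoint components of a single level set and no global $f$ would exist.
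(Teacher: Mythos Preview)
Your proposal is correct and follows essentially the same route as the paper: the oscillation bound on large circles (the paper's Lemma~\ref{lem:max}, which your Cauchy--Schwarz argument recovers cleanly in the 2D case), Sard's theorem to reduce $u(\mathbf S)$ to a point, max/min arguments on balls and annuli for (i)--(ii), and the Bernoulli identity $\nabla B=\omega\nabla u$ to produce the semilinear equation. The one point to tighten is the \emph{connectedness} of $\{u>c\}$: ruling out bounded components of $\{u\le c\}$ only shows that each component of $\{u>c\}$ has no holes, and you still need the paper's observation that every component of $\{u>c\}$ contains an interior maximum, hence a point of the connected set $\mathbf S$, so there can be only one component and $\{u=c\}$ is indeed a single Jordan curve.
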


In words, we first show that there must be at least a stagnation point. Moreover, if the stagnation set is connected, then $u$ has a limit at infinity and it solves a semilinear elliptic equation~\eqref{semilinear}. At this point, one is tempted to conjecture that the stream function must be radially symmetric. We can prove this under a certain control on the oscillations of the vorticity at infinity:

\begin{theorem}
	\label{main:thm}
	Let $v\in L^2(\RD) \cap \mathcal{C}^1(\RD)$ be a solution of~\eqref{Eu} for which the stagnation set $\mathbf S$ is connected. Assume that there exists $B(R)$, ball of radius $R>0$, such that
	\begin{equation}
		\label{H1}\tag{H} \omega  \text{ has no local extrema in} \  \RD \setminus B(R).
	\end{equation} 
	Then $v$ is a circular flow or, in other words, the stream function $u$ is radially symmetric with respect to a point. In particular, the stagnation set $\mathbf S$ is either a point or a closed ball. 
	
	Moreover, $\omega$, $p$ and $B$ belong to $L^1(\R^2)$ (for a suitable choice of the constant in the definition of $p$) and
	\begin{gather*}
		\lim_{|x| \to + \infty} |v(x)| =  \lim_{|x| \to + \infty} \omega(x) = \lim_{|x| \to + \infty} B(x) =0,\\
		\int_{\R^2} \omega(x) \, dx =\int_{\R^2} B(x) \, dx = 0.
	\end{gather*}	
\end{theorem}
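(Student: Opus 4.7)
By Theorem~\ref{main:thm0}, I may assume $u$ solves $-\Delta u=f(u)$ on $\RD$ with $u\equiv 1$ on the connected set $\mathbf S$, $u(x)\to L\in[-\infty,1)$ as $|x|\to\infty$, $f\in \mathcal C^0((L,1])$, and $\n u\in L^2(\RD)$ (since $\n u=v^\perp$). Since $\o=\Delta u=-f(u)$ outside $\mathbf S$, hypothesis~\eqref{H1} asserts that $f(u)$, and thereby $u$ itself, has no local extrema on $\RD\setminus B(R)$. As a first step I would use this together with the decay at infinity to show that for $s\in(L,1)$ sufficiently close to $L$ the set $\{u>s\}$ is open, bounded, connected, contains $B(R)\cup\mathbf S$, and has a single Jordan-curve boundary: any ``hole'' of $\{u>s\}$ or extra bounded component of $\{u<s\}$ would force a local extremum of $u$ outside $B(R)$, contradicting~\eqref{H1}. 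In particular $|\{u>s\}|<\infty$ for every $s>L$, so Steiner rearrangement of $u$ is well defined.

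Next, I would adapt Brock's continuous Steiner symmetrization to this non-compact setting. Fix a line $\ell\subset\RD$ and, after a suitable truncation/normalization to handle $L=-\infty$ (where $F(u)$ may fail to be integrable), construct the continuous family $u^t$, $t\ge 0$, deforming $u$ towards its Steiner rearrangement with respect to $\ell$. Two facts drive the argument: the rearrangement preserves the distribution of $u$, so $\int_{\RD} G(u^t)\,dx=\int_{\RD} G(u)\,dx$ for every $G$ and in particular $\int F(u^t)\,dx=\int F(u)\,dx$ for any primitive $F$ of $f$; and $\int_{\RD}|\n u^t|^2\,dx$ is non-increasing in $t$. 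Using that $u$ is a critical point of the energy $E(w)=\tfrac12\int|\n w|^2\,dx-\int F(w)\,dx$, one shows that the Dirichlet integral of $u^t$ actually coincides with that of $u$ for small $t>0$; Brock's rigidity then forces $u$ to be symmetric with respect to a line parallel to $\ell$. Varying $\ell$ over all lines yields radial symmetry of $u$ about some point, whence $\mathbf S=\{u=1\}$ is either a point or a closed ball.

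With radial symmetry in hand, the remaining conclusions follow from one-variable analysis. Writing $u=u(r)$, hypothesis~\eqref{H1} forces $\o(r)$ to be monotone for $r\ge R$. The identity $(r u_r)'=r\o$ integrates to $2\pi r u_r(r)=\int_{B_r}\o\,dx$; combining $\n u\in L^2$ with the eventual monotonicity of $\o$ one deduces $r u_r(r)\to 0$, and hence $\o\in L^1(\RD)$ with $\int_{\RD}\o\,dx=0$. For the Bernoulli function $B$, the first equation in \eqref{Eu2} reduces in the radial setting to $B'(r)=-\o(r)u_r(r)$, and the integrability and decay of the right-hand side propagate to $B$ (and, for a suitable choice of constant, to $p$), giving $B(r)\to 0$ at infinity, $B\in L^1(\RD)$, and $\int_{\RD} B\,dx=0$.

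\textbf{Main obstacle.} The principal difficulty is the adaptation of continuous Steiner symmetrization to this non-compact setting, where $u$ merely tends to a (possibly infinite) limit rather than decaying in a Sobolev sense. Hypothesis~\eqref{H1} is precisely what tames the level-set geometry at infinity enough to make the rearrangement and the associated variational identity work; the case $L=-\infty$ is especially delicate, since $F(u)$ need not be integrable and one must work with energy differences between level sets rather than with $F(u)$ itself.
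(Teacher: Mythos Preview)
Your high-level strategy—apply Theorem~\ref{main:thm0}, then run continuous Steiner symmetrization à la Brock on the resulting semilinear equation, then finish with one-variable analysis—is exactly the paper's. But several of the steps that actually make the argument go through are either misidentified or missing.

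\textbf{The role of hypothesis~\eqref{H1}.} You use~\eqref{H1} only to control the topology of the superlevel sets of $u$, but that is already contained in the proof of Theorem~\ref{main:thm0} (Step~1 there shows every $\Gamma_c$, $c\in(L,1)$, is a single closed curve enclosing $\mathbf S$, without any appeal to~\eqref{H1}). The crucial consequence of~\eqref{H1} that you never extract is that $f$ is \emph{monotone} on some interval $(L,\LL)$: if $f$ had a local extremum at $t_0\in(L,\LL)$ then, choosing $\LL$ close enough to $L$ that the level curve $\{u=t_0\}$ lies in $\RD\setminus B(R)$, every point of that curve would be a local extremum of $\omega=-f(u)$, contradicting~\eqref{H1}. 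This monotonicity of $f$ near $L$ is the engine of the whole proof.

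\textbf{The symmetrization step.} The sketch ``$u$ is a critical point of $E$, hence the Dirichlet integral of $u^t$ coincides with that of $u$ for small $t>0$'' hides two real issues. First, $u^t-u$ is not an admissible test function: neither $u$ nor $u^t$ lies in $H^1(\RD)$, so the identity $\int_{\RD}\nabla u\cdot\nabla(u^t-u)\,dx=\int_{\RD} f(u)(u^t-u)\,dx$ must be justified separately; the paper does this via estimates on enlarging balls (Lemma~\ref{lemma-u-phi}). Second, and more seriously, what feeds into Brock's rigidity is not equality of Dirichlet integrals but
\[
\liminf_{t\to 0^+}\frac{1}{t}\int_{\RD} f(u)\,(u^t-u)\,dx \ \ge\ 0,
\]
and this is \emph{not} a formal consequence of Cavalieri's principle. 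The paper proves it by splitting the integral: on a large ball one uses the quantitative $L^2$-continuity of $t\mapsto u^t$ (Proposition~\ref{prop:l2}); on the exterior the integrand is pointwise nonnegative precisely because $f$ is non-increasing on $(L,\LL)$ and both $u$ and $u^t$ take values in $(L,\LL)$ there. Without the monotonicity of $f$ near $L$ you have no control on this exterior piece.

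\textbf{Integrability of $F(u)$.} You anticipate trouble when $L=-\infty$ and propose to work with truncations and ``energy differences between level sets''. The paper instead proves $F(u)\in L^1(\RD)$ outright (for the correct additive constant) via a Pohozaev-type identity (Lemma~\ref{FL1}): multiplying $-\Delta u=f(u)$ by $x\cdot\nabla u$ on $B(R)$ expresses $\int_{B(R)}F(u)\,dx$ in terms of boundary integrals controlled by $\nabla u\in L^2$. Combined with the monotonicity of $f$ on $(L,\LL)$ this forces $f(t)\to 0$ and $F(t)\to 0$ as $t\to L$, and both $f(u)$ and $F(u)$ are in $L^1(\RD)$ with vanishing integral. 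Since $\omega=-f(u)$ and $B=-F(u)$, this is exactly where the $L^1$ statements and the zero-integral identities for $\omega$ and $B$ come from; they are obtained \emph{before} radial symmetry, not after.

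\textbf{From local symmetry to radial symmetry.} Brock's rigidity does not give reflection symmetry with respect to a line; it gives \emph{local symmetry in every direction} (Definition~\ref{locallyrad}): $\RD\setminus\mathbf S$ is a countable disjoint union of annuli $A_{q_k}(r_k,R_k)$ on each of which $u$ is radial. One still needs the connectedness of $\mathbf S$ to conclude: since $\partial A_k\subset\mathbf S$, each $R_k$ must be $+\infty$, so there is a single annulus and $u$ is globally radial about its center.
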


It is worth pointing out that the connectedness of the stagnation set $\mathbf S$ is a necessary assumption for Theorems~\ref{main:thm0},~\ref{main:thm} to hold. Indeed, in~\cite{MPW12} the authors construct a finite energy nonradial solution to a semilinear equation in the form \eqref{semilinear}. In particular, this construction gives rise to a solution to~\eqref{Eu}. Moreover, one can check that this solution also satisfies~\eqref{H1}, but its stagnation set is disconnected. Another example is~\cite{EFR24}, where a solution to~\eqref{Eu} with compact support and noncircular streamlines is built. In this case, the associated stream function is not given by an autonomous semilinear equation like~\eqref{semilinear}.

Let us compare Theorems~\ref{main:thm0},~\ref{main:thm} with the most related results in literature. In~\cite{GPSY21} the vorticity is assumed to be nonnegative, which is actually incompatible with $v \in L^2(\RD)$ (it suffices to take $\phi=1$ in Lemma~\ref{lemma-u-phi}). In~\cite{CFZ24,HN23} radial symmetry has been shown in the case of exterior domains $\RD \setminus \Omega$ under tangential boundary conditions ($v \cdot \nu=0$ on $\partial \Omega$) or no slip boundary conditions ($v =0$ on $\partial \Omega$). In both cases, $v$ is assumed to be of class $ \mathcal{C}^2$. More importantly, there is more information on the set $\Omega$: it is the closure of a bounded regular domain, or it is just a point, or a ball. We emphasize that here we only require the set $\mathbf S$ to be connected, and no assumptions are made on $v$, $\omega$ on $\partial \mathbf S$.

Another difference is that in~\cite{CFZ24, HN23} it is required
\[
\liminf_{|x| \to +\infty} |v(x)| > 0,
\]
which is incompatible with $v \in L^2(\RD)$. From this point of view, our settings are complementary. Finally, it is assumed that the radial component of $v$ satisfies $v_{\mathrm{rad}} =  o(1/|x|)$ as $|x| \to +\infty$. This implies that the streamlines are close to circles near infinity, which enables the use of the moving plane technique from infinity. Our condition at infinity is of different nature, as we only need to avoid oscillations of $\omega$ near infinity. As explained below, we do not use a moving plane procedure in our arguments.

We now provide some insights into the proofs of these results. Assertion~\ref{A1} in Theorem~\ref{main:thm0} is based on a the following fact: if $\mathbf S$ is compact or connected, then the stream function $u$ has limit at infinity (see Proposition~\ref{prop:L2grad}). The proof of this result follows from some estimates on the oscillations of $u$ on large spheres, and this is the starting point of the rest of the paper. 

With this in hand, we are able to show that $u$ solves a semilinear equation if $\mathbf S$ is connected. Because of our low regularity framework we can not directly argue as in~\cite{HN17, HN18, HN19, HN23}; our proof is somewhat different and uses that the Bernoulli function is constant along the streamlines by~\eqref{Eu2}.

In Theorem~\ref{main:thm} we show radial symmetry of the stream function. It is worth pointing out that the usual moving plane technique is not applicable here. First, because of the lack of regularity of the function $f(u)$ (which is only continuous), but more importantly because of the lack of information on the set $\mathbf{S}$. In this low regularity framework, we prove symmetry by using the continuous Steiner symmetrization. This technique, developed in~\cite{Bro95,Bro00}, has attracted much attention recently, see~\cite{CHVY19, GPSY21, Ru23, WZ23}. Roughly speaking, given a function $u$, the idea is to build a 1-parameter family of functions $u^t$ ($t \in [0, +\infty]$) such that $u^0=u$ and $u^{\infty}$ is the Steiner symmetrization of $u$. 

This method is based on the identity:
\begin{equation} \label{identity} 
	\int_{\R^2} \nabla u \cdot \nabla (u^t-u) \, dx = \int_{\R^2} f(u)(u^t-u) \, dx,
\end{equation} 
which is formally obtained by multiplying~\eqref{semilinear} by $u^t-u$ and integrating by parts. With some work one can prove that:
\begin{equation} \label{otro} 
	\liminf_{t \to 0^+} \frac 1 t \int_{\R^2} f(u)(u^t-u) \, dx \geq 0.
\end{equation}
But this estimate on the left hand side of~\eqref{identity} implies local radial symmetry, see~\cite{Bro00}.

In our case, this integration by parts has to be justified, and the integrability of the functions $f(u)u$, $f(u)u^t$ has to be proved. Recall that the limit $L$ of $u$ at infinity could be $-\infty$: moreover, a priori, the function $f$ could not be defined in $L$. Finally, the proof of~\eqref{otro} requires also that $F(u)$ is integrable, where $F$ is a primitive of $f$. We are able to show the integrability of those functions thanks to assumption~\eqref{H1}, together with suitable estimates in enlarging balls (see Lemmas~\ref{lemma-u-phi},~\ref{FL1}). Finiteness of the energy is a crucial ingredient in many parts of the proofs, as well as assumption~\eqref{H1}.

Still, the symmetry results proved in~\cite{Bro00} cannot be applied directly to our framework. One of the reasons is that the limit $L$ can be $- \infty$, a case not covered in~\cite{Bro00}. This forces us to adapt the argument of continuous Steiner symmetrization to our setting. In particular we need adaptations of the Pólya-Szegő inequality as well as local symmetry results (Propositions~\ref{lemma:polya},~\ref{prop:loc:symm}).

The rest of the paper is organized as follows. In Section~\ref{sec:prel} we prove Theorem~\ref{main:thm0}. Section~\ref{sec:brock} is devoted to present the basic concepts of the continuous Steiner symmetrization and to adapt some of these results to our framework. In Section~\ref{sec:radialsymmetry}, we establish a radial symmetry result for solutions to elliptic semilinear problems, from which Theorem~\ref{main:thm} follows in Section~\ref{sec:teo2}.

\medskip

\subsection*{Notation} We use the notation $B_{q}(R)$ to denote the open ball centered at $q$ with radius $R>0$, and $A_{q}(R_1, R_2)$ ($0 \leq R_1 < R_2 \leq +\infty$) to denote the corresponding open annulus. If the center $q$ is $0$ we drop the subscript and write directly  $B(R)$, $A(R_1, R_2)$. For any measurable set $\Omega$, we denote by $|\Omega|$ its Lebesgue measure.

\section{Proof of Theorem~\ref{main:thm0}} \label{sec:prel}

We begin this section by proving general geometric properties of a function over $\R^N$ with gradient in $L^N(\R^N)$, $N \geq 2$, that, in our opinion, are of independent interest.

\begin{proposition}
\label{prop:L2grad}
Consider $u\in\mathcal C^1(\R^N)$ with $ \nabla u \in L^N(\R^N)$ and set
\[
\mathbf S=\{x\in\R^N:\nabla u(x)= 0 \}.
\]
Assume one of the following: 
\begin{enumerate}
\item[(i)] $\mathbf S$ is compact,
\item[(ii)] $\mathbf S$ is connected and $u\in\mathcal C^N(\R^N)$.
\end{enumerate}
Then 
 there exists $L\in[-\infty,+\infty]$ such that
\[
\lim_{\abs{x}\to+\infty}u(x)=L.
\]
\end{proposition}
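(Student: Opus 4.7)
The approach combines a quantitative oscillation estimate on large spheres (from $\nabla u\in L^N$) with the structural information on the stagnation set $\mathbf S$. Applying the Sobolev--Morrey embedding $W^{1,N}(\Sp)\hookrightarrow \mathcal{C}^{0,1/N}(\Sp)$ to the rescaled function $\omega\mapsto u(r\omega)$ yields
\[
\mathrm{osc}_{\mathbb{S}^{N-1}(r)} u \;\leq\; C_N\,\bigl(r\,G(r)\bigr)^{1/N}, \qquad G(r):=\int_{\mathbb{S}^{N-1}(r)}|\nabla u|^N\,d\sigma.
\]
Since $\int_0^{+\infty} G(r)\,dr=\|\nabla u\|_{L^N(\RN)}^N$ is finite, for every large $R$ the mean value theorem produces a \emph{good} radius $\rho\in[R,2R]$ with $\rho\,G(\rho)\to 0$; thus good radii, on which $u$ has arbitrarily small oscillation, are dense in log-scale near infinity.

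\emph{Contradiction setup.} Suppose for contradiction that $L_-:=\liminf_{|x|\to+\infty}u<\limsup_{|x|\to+\infty}u=:L_+$ and fix $L_-<a<b<L_+$. Under (i), $u$ has no critical points in $\RN\setminus B(R_0)$, so $\{u=t\}$ is a smooth $(N-1)$-submanifold there for every $t$. Under (ii), $u\in\mathcal{C}^N$ allows the application of Sard's theorem, making the critical values $u(\mathbf S)$ a null set; one may then arrange that a.e.\ $t\in[a,b]$ is a regular value, so that $\{u=t\}$ is a smooth $(N-1)$-submanifold of $\RN$. Combining this with density of good radii and the hypotheses $\liminf u<a,\limsup u>b$, one extracts a sequence of pairwise disjoint annuli $A_k=A(\rho_1^{(k)},\rho_2^{(k)})$ with $\rho_1^{(k)}\to+\infty$, both boundary spheres good, bounded ratio $\rho_2^{(k)}/\rho_1^{(k)}\leq 4$, and $u<a$ on one boundary sphere, $u>b$ on the other.

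\emph{Coarea + isoperimetric lower bound.} In each annulus and for every regular $t\in(a,b)$, the level set $\{u=t\}\cap A_k$ contains a compact $(N-1)$-submanifold separating the two boundary spheres; by radial projection onto the inner sphere $\mathbb{S}^{N-1}(\rho_1^{(k)})$, which is area-nonincreasing on radial spheres of radius $\geq\rho_1^{(k)}$, this separating piece has $(N-1)$-Hausdorff measure at least $c(\rho_1^{(k)})^{N-1}$. The coarea formula and Hölder's inequality with exponents $(N,N/(N-1))$ then give
\[
c(b-a)(\rho_1^{(k)})^{N-1}\;\leq\;\int_{A_k}|\nabla u|\,dx\;\leq\;|A_k|^{(N-1)/N}\|\nabla u\|_{L^N(A_k)}\;\leq\; C(\rho_2^{(k)})^{N-1}\|\nabla u\|_{L^N(A_k)},
\]
and the bounded ratio forces $\|\nabla u\|_{L^N(A_k)}\geq c_0(b-a)>0$ uniformly in $k$; summing over the infinitely many disjoint $A_k$ contradicts $\nabla u\in L^N(\RN)$.

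\emph{Main obstacle.} The delicate point is the extraction of the disjoint annuli with bounded ratio and the prescribed sign of $u$ on boundary spheres: one must carefully track how the sphere average $\phi(r)=|\Sp|^{-1}\int_{\Sp} u(r\omega)\,d\omega$ transitions between the regions $\{\phi<a\}$ and $\{\phi>b\}$, and rule out pathological long-transition scenarios in which $u$ is trapped in $[a-\varepsilon,b+\varepsilon]$ over arbitrarily long dyadic ranges. Under (i), the compactness of $\mathbf S$ (no critical points outside $B(R_0)$) provides the needed rigidity; under (ii), the connectedness of $\mathbf S$ plays the analogous role in ensuring that regular level sets genuinely separate annular boundaries, while the $\mathcal{C}^N$ regularity enters solely through Sard's theorem.
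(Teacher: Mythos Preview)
Your proposal has a genuine gap at exactly the point you label the ``main obstacle'': you never actually extract the infinitely many disjoint annuli $A_k$ with bounded radius ratio and with $u<a$ on one boundary sphere, $u>b$ on the other. More seriously, the only use you make of hypotheses (i) and (ii) is to guarantee that level sets $\{u=t\}$ are smooth hypersurfaces (trivially in case (i), via Sard in case (ii)). This cannot suffice. The paper exhibits the counterexample $u(x)=\cos\bigl((\log(2+|x|^2))^\alpha\bigr)$, which is $\mathcal C^\infty$, has $\nabla u\in L^N(\R^N)$, and fails to have a limit at infinity; for this function \emph{every} $t\in(-1,1)$ is a regular value (the stagnation set is a countable union of spheres with $u(\mathbf S)=\{\pm1\}$), yet the transitions between $\{u<a\}$ and $\{u>b\}$ occur over annuli of unbounded radius ratio, so your coarea--H\"older step yields no uniform lower bound on $\|\nabla u\|_{L^N(A_k)}$. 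Thus smoothness of level sets is not where (i)/(ii) do their work, and your sentences ``compactness provides the needed rigidity'' and ``connectedness plays the analogous role in ensuring that regular level sets genuinely separate'' are assertions, not arguments; separation follows from the intermediate value theorem alone and has nothing to do with $\mathbf S$.

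The paper's proof avoids coarea entirely and is much shorter. From the same oscillation lemma one takes a sequence of good radii $R_n\to\infty$ with $u|_{\partial B(R_n)}\to\hat L$ for some $\hat L$; assuming $\hat L\neq\overline L:=\limsup u$, one picks $y_n$ with $|y_n|>R_n$, $u(y_n)\to\overline L$, and encloses $y_n$ in an annulus $A_n=A(R_n,R_{\sigma(n)})$ bounded by two good spheres. Since $u=\hat L+o(1)$ on $\partial A_n$ but $u(y_n)=\overline L+o(1)$ inside, $u|_{A_n}$ attains its maximum at an interior point $z_n$, so $z_n\in\mathbf S$ with $|z_n|\to\infty$ and $u(z_n)\to\overline L$. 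This already contradicts (i). For (ii), Sard together with connectedness forces $u$ to be constant on $\mathbf S$, equal to $\overline L$; but $\mathbf S$ is now unbounded and connected, hence meets $\partial B(R_n)$ for large $n$, giving $\overline L=\hat L$, a contradiction. The hypotheses enter precisely to rule out the existence of such interior extrema escaping to infinity, not to control the geometry of level sets.
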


\begin{remark} 
First we observe that the limit $L$ above could be infinite. This is the case of the function $\psi: \R^N \to \R$,
\[
\psi (x) = \left (\log(2+|x|^2) \right )^{\alpha}, \quad  \alpha \in (0,1-1/N).
\]
Clearly $\psi \in \mathcal{C}^\infty(\RN)$ and it is easy to check that $\nabla \psi \in L^N(\R^N)$.

Moreover, the assumption on the set $\mathbf S$ is necessary for the existence of a limit at infinity. Indeed, the function.
\[
u: \R^N \to \R, \quad u(x)= \cos (\psi(x)),
\]
satisfies that $\nabla u \in L^N(\R^N)$ but has no limit at infinity. In this case,
\[
\mathbf S =  \bigcup_{k=1}^{+\infty} \partial B(R_k) \cup \{0\}, \quad R_k = \sqrt{e^{(k\pi)^{1/\alpha}} -2 }.
\]

\end{remark}

\medskip 
In order to prove Proposition~\ref{prop:L2grad} we need the following lemma.
\begin{lemma}\label{lem:max} Let $u \in \mathcal C^{1}(\R^N)$ with $ \nabla u \in L^N(\R^N)$. Then there exists a sequence $(R_n)_{n \in \N} \subseteq (0,+\infty)$, $R_n \to +\infty$ such that
\[
\max \{ |u(x) - u(y)|: \ x, y \in \partial B(R_n) \} \to 0, \quad \text{as } n \to +\infty.
\]
\end{lemma}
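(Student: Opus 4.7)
The plan is to control the oscillation of $u$ on well-chosen spheres via Morrey's embedding on the $(N-1)$-dimensional sphere, exploiting only the fact that $\nabla u \in L^N(\R^N)$. Writing the integrability condition in polar coordinates gives
\[
\int_{\R^N} |\nabla u|^N \, dx = \int_0^{+\infty} h(R) \, dR < +\infty, \qquad h(R) := \int_{\partial B(R)} |\nabla u|^N \, dS,
\]
so $h \in L^1(0,+\infty)$. For each $n \in \N$ I would extract a good radius $R_n \in [n, 2n]$ satisfying $h(R_n) \leq \frac{1}{n} \int_n^{2n} h(R) \, dR$, which yields $R_n \cdot h(R_n) \leq 2 \int_n^{2n} h(R)\, dR \to 0$ as $n \to +\infty$.

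On each such sphere I would then rescale to the unit sphere via $v_n(\omega) := u(R_n \omega)$ for $\omega \in \mathbb{S}^{N-1}$. A direct computation gives $|\nabla_{\mathbb{S}^{N-1}} v_n(\omega)| = R_n |\nabla_T u(R_n \omega)| \leq R_n |\nabla u(R_n \omega)|$, and a change of variables in the resulting integral yields
\[
\int_{\mathbb{S}^{N-1}} |\nabla_{\mathbb{S}^{N-1}} v_n|^N \, d\omega \leq R_n \cdot h(R_n).
\]
Applying Morrey's embedding on the compact manifold $\mathbb{S}^{N-1}$ with exponent $p = N > N-1 = \dim(\mathbb{S}^{N-1})$ controls the oscillation of $v_n$ by its Dirichlet integral: for some constant $C = C(N)$,
\[
\max_{\omega_1, \omega_2 \in \mathbb{S}^{N-1}} |v_n(\omega_1) - v_n(\omega_2)| \leq C\, \|\nabla_{\mathbb{S}^{N-1}} v_n\|_{L^N(\mathbb{S}^{N-1})} \leq C \bigl(R_n \cdot h(R_n)\bigr)^{1/N}.
\]
Since oscillation is invariant under the rescaling and $R_n h(R_n) \to 0$, this gives the claim.

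The main potential obstacle is avoiding a dimension-dependent blowup of Morrey's constant as the spheres dilate to infinity. The scaling to the unit sphere circumvents this, since Morrey's constant is computed once and for all on $\mathbb{S}^{N-1}$ and depends only on $N$; the price is the factor $R_n$ appearing inside the $L^N$ norm of the rescaled tangential gradient, but this factor is exactly compensated by our choice of $R_n$ in the interval $[n, 2n]$, whose length $n$ produces (via the mean value argument) the decay $R_n h(R_n) \to 0$. The remaining details are routine: verifying the polar decomposition of the integral, and checking that $|\nabla_T u| \leq |\nabla u|$.
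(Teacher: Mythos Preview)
Your proof is correct and follows essentially the same approach as the paper: both rescale to the unit sphere and apply Morrey's embedding $W^{1,N}(\mathbb{S}^{N-1}) \hookrightarrow C^0$ to bound the oscillation on $\partial B(R)$ by $C\bigl(R\int_{\partial B(R)}|\nabla u|^N\,d\sigma\bigr)^{1/N}$. The only difference is cosmetic: the paper argues by contradiction (if the oscillation stayed bounded below, then $\int_{\partial B(R)}|\nabla u|^N\,d\sigma \gtrsim 1/R$ for all large $R$, forcing $\|\nabla u\|_{L^N}=+\infty$), whereas you proceed directly by choosing $R_n\in[n,2n]$ via the mean-value inequality.
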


\begin{proof}
	
We start by observing that any function $\phi \in W^{1,N}(\mathbb{S}^{N-1})$ is continuous and there holds:
\begin{equation} \label{morrey} 
	\max \{ |\phi(x) - \phi(y)|: \ x, y \in \mathbb{S}^{N-1} \} \leq C \| \nabla \phi \|_{L^N(\mathbb{S}^{N-1})}. 
\end{equation}
Assume, reasoning by contradiction, that there exist $\e>0$ and $R_0>0$ such that
\[
\max \{ |u(x) - u(y)|: \ x, y \in \partial B(R) \} \geq \e, \quad \mbox{for all } R > R_0.
\]
For any such $R$, define $\phi_R: \mathbb{S}^{N-1} \to \R$, $\phi_R (z)= u(Rz)$. By~\eqref{morrey}, we have:
\begin{align*}
 \e^N &\leq \max \{ |u(x) - u(y)|^N: \ x, y \in \partial B(R) \} \\
 	&= \max \{ |\phi_R(x) - \phi_R(y)|^N: \ x, y \in \mathbb{S}^{N-1} \} \\
	&\leq C \int_{\mathbb{S}^{N-1}} |R \, \nabla^{T}u(Rz )|^N \, d\sigma(z) \\
	&\leq  C R \int_{\partial B(R)}  |\nabla u(x) |^N \, d\sigma(x). 
 \end{align*}
Here $\nabla^T$ denotes the tangential derivative in $\mathbb{S}^{N-1}$. Then,
\[
\int_{\R^N} |\nabla u(x)|^N \, dx \geq \int_{R_0}^{+\infty} \int_{\partial B(r)}  |\nabla u(x) |^N \, d\sigma(x)  \, dr \geq \frac{\e^N}{C} \int_{R_0}^{+\infty} \frac{1}{r} \, dr = + \infty,
\]
a contradiction which finishes the proof. 

\end{proof}

\begin{proof}[Proof of Proposition~\ref{prop:L2grad}]
	
Define:
\[
\overline{L} = \limsup_{|x| \to +\infty} u(x) \in [-\infty, +\infty], \quad \text{and} \quad  \underline{L} = \liminf_{|x| \to + \infty} u(x) \in [-\infty, +\infty].
\]

Reasoning by contradiction, let us assume that $\overline{L} > \underline{L}$. 
Take $(R_n)_{n \in \N}$ as given by Lemma~\ref{lem:max}: up to a subsequence, we can assume that $u(x_n) \to \hat{L} \in [\underline{L}, \overline{L}]$ for $x_n \in \partial B(R_n)$. By the definition of $R_n$, the limit does not depend on the choice of the point $x_n \in \partial B(R_n)$. Observe that $\hat{L} $ cannot coincide with both $\underline{L}$ and $ \overline{L}$: in what follows we assume that $\hat{L} \neq \overline{L}$, the other case being analogous.

By the definition of $\overline{L}$, there exists a sequence  $(y_n)_{n \in \N} \subseteq \R^N$, $|y_n| > R_n$ such that $u(y_n) \to \overline{L}$ as $n \to +\infty$. We define $\sigma(n)$ so that $y _n \in B(R_{\sigma(n)})$. We consider $u$ in the annulus $A_n = A(R_n, R_{\sigma(n)})$, and observe that:
\[
u_{|\partial A_n} = \hat{L} + o_n(1), \quad y_n \in A_n, \quad u(y_n) = \overline{L} + o_n(1).
\]

As a consequence the function $u_{|A_n}$ attains its maximum at some $z_n \in A_n$. In sum, there exists a sequence $z_n$ such that
\begin{equation} 
	\label{zn} |z_n| \to +\infty, \quad \nabla u(z_n)=0, \quad u(z_n) \to \overline{L}. 
\end{equation}
 
In particular $z_n \in \mathbf{S}$ and this is clearly a contradiction with (i). Let us discuss now the case (ii). Observe that by continuity $u({\mathbf S})$ is an interval. Moreover, by  Sard's Theorem,  $u({\mathbf S})$ has zero Lebesgue measure in $\R$, and this implies that $u$ is constant in $\mathbf S$. By~\eqref{zn}, $u \equiv \overline{L}$ on $\mathbf S$.

Since  $\mathbf S$ is unbounded and connected, $\mathbf S \cap \partial B(R_n) \neq \emptyset$ if $n$ is sufficiently large. This implies that $u(z)= \hat{L}$ for any $z \in \mathbf S$, and this gives the desired contradiction. 

\end{proof}

\begin{corollary}
\label{cor:L2grad} 
Let $u\in\mathcal C^1(\R^N)$ be such that $\nabla u\in L^N(\R^N)$.
Then there exists $x\in\R^N$ such that $\nabla u(x)=0$.
\end{corollary}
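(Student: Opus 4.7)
The plan is to prove this by contradiction, leveraging Proposition~\ref{prop:L2grad}(i). Suppose that $\nabla u(x) \neq 0$ for every $x \in \R^N$. Then $\mathbf S = \emptyset$, which is trivially compact, so hypothesis (i) of Proposition~\ref{prop:L2grad} is satisfied. Consequently, there exists $L \in [-\infty, +\infty]$ such that $\lim_{|x| \to +\infty} u(x) = L$.

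From here I would split into three cases according to the value of $L$. If $L = +\infty$, then fixing any $x_0 \in \R^N$ there exists $R > |x_0|$ such that $u(x) > u(x_0)$ for all $|x| > R$; the continuous function $u$ attains its minimum on the compact set $\overline{B(R)}$ at some point $x^\star$, and by construction $x^\star$ is a global minimum of $u$ on $\R^N$, so $\nabla u(x^\star) = 0$. The case $L = -\infty$ is completely symmetric, producing a global maximum.

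If $L \in \R$, I would consider two subcases. Either $u \equiv L$ on $\R^N$, in which case $\nabla u \equiv 0$ and we are done immediately; or there exists $x_0 \in \R^N$ with $u(x_0) \neq L$. Up to replacing $u$ by $-u$ we may assume $u(x_0) > L$. Setting $\eta = \tfrac{1}{2}(u(x_0) - L) > 0$, there is $R > |x_0|$ such that $u(x) < L + \eta < u(x_0)$ for all $|x| > R$. Then $u$ attains its maximum on $\overline{B(R)}$ at some $x^\star$, and the choice of $R$ forces $x^\star$ to be a global maximum of $u$, so $\nabla u(x^\star) = 0$. In every subcase we reach a contradiction with $\mathbf S = \emptyset$.

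There is no real obstacle here; the argument is essentially a direct consequence of Proposition~\ref{prop:L2grad}. The only mildly delicate point is handling the possibilities $L = \pm\infty$ and $L \in \R$ uniformly, which is why splitting cases feels like the cleanest route. Note that the stronger hypothesis (ii) of Proposition~\ref{prop:L2grad} is not needed, since the empty stagnation set is automatically compact.
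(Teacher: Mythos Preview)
Your proof is correct and follows essentially the same approach as the paper: assume $\mathbf S=\emptyset$, invoke Proposition~\ref{prop:L2grad}(i) to obtain the limit $L$, and then argue that unless $u\equiv L$ the function must attain a global extremum, contradicting $\mathbf S=\emptyset$. The paper compresses your three cases into the single observation that if $u\not\equiv L$ then either $\sup u>L$ or $\inf u<L$, but the content is identical.
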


\begin{proof}
Assume by contradiction that $u$ has no critical points, then we can apply Proposition~\ref{prop:L2grad} to deduce that
\[
\lim_{\abs{x}\to+\infty}u(x)=L.
\]
If $u\equiv L$, we are done. Otherwise, either $\sup u >L$ or $\inf u < L$:  then $u$ must achieve a maximum or a minimum, respectively: a contradiction occurs.

\end{proof}

\begin{proof}[Proof of Theorem~\ref{main:thm0}]

 We recall that the stream function $u$ is defined as the unique function $u:\RD\to\R$ (up to a constant) such that $(\nabla u)^\perp=(-\partial_{x_2}u,\partial_{x_1}u)=v$.

Since $v\in\mathcal C^1(\R^2)$, we have $u\in\mathcal C^2(\R^2)$, so we are in the conditions of Proposition~\ref{prop:L2grad}. The statement~\ref{A1} follows immediately from  Corollary~\ref{cor:L2grad} above, so we just need to prove~\ref{A2}.

We can argue as in the last part of the proof of Proposition~\ref{prop:L2grad}: $u(\mathbf S)$ is an interval in $\R$ by continuity, but it has zero Lebesgue measure by Sard's Theorem, so that $u$ is constant in $\mathbf S$. Since there are no other critical points, it turns out that the values $L$ and $u(\mathbf S)$, correspond to the infimum and supremum of $u$, which must be different values. Up to changing sign and adding a constant, we can assume that $u \equiv 1$ in $\mathbf{S}$, and $L = \inf u <1$. The statements i) and ii) follow.
	
We now turn our attention to iii), which is proved in three steps.

\medskip

\noindent {\bf Step 1.  For any $c \in (L,1)$ the level sets $\Gamma_c= \{ x \in \R^2: u(x) =c \}$ are $\mathcal C^2$ closed and connected curves containing $\mathbf S$ in their interior.}

\medskip

Given $c \in (L,1)$, the set $\Gamma_c$ is $\mathcal C^2$ by the implicit function theorem. Moreover, $\Gamma_c$ is compact since $\lim_{|x| \to \infty} u(x)=L$. As a consequence, there exists a countable set $I$ such that 
	\[
	\Gamma_c = \bigcup_{i \in I} \gamma_i,
	\]
	where $\gamma_i$ are closed $\mathcal C^2$ curves in $\R^2$. 
	
 We denote by $I(\gamma_i)$ the interior of $\gamma_i$, that is, the bounded connected component of $\R^2 \setminus \gamma_i$. Since $u$ is constant on $\gamma_i$ we conclude that there is at least a local maximum or minimum of $u$ in $I(\gamma_i)$; since all critical points form the connected set $\mathbf S$, we conclude $\mathbf S \subseteq I(\gamma_i)$. 
	
	We now prove that the indices set $I$ has only one element, that is, $\Gamma_c$ is connected. Otherwise assume that $\gamma_1$ and $\gamma_2$ are closed curves in $\Gamma_c$. By the previous argument, $\mathbf S \subseteq I(\gamma_1) \cap I(\gamma_2)$. Without loss of generality, we can assume that $\gamma_2 \subseteq I(\gamma_1)$. Then, again, $u$ would have a local maximum or minimum in $I(\gamma_1) \setminus \overline{I(\gamma_2)}$, but $\mathbf S\cap\left( I(\gamma_1) \setminus \overline{I(\gamma_2)}\right)=\emptyset$, a contradiction. This concludes the proof of Step 1.
 
 \medskip 

\noindent {\bf Step 2. There exists a continuous function $f:(L,1)\to \R$ such that $-\Delta u = f(u)$ in $\R^2 \setminus \mathbf S$.}

\medskip 	

	For any $c \in (L,1)$, take $x \in \Gamma_c$ and define $F(c)= -B(x)$, where $B$ is the Bernoulli function $B= \frac 1 2 |\nabla u|^2 +p$. Because of~\eqref{Eu2}, $F$ is well defined as $B(x)$ does not depend on the choice of the point $x \in \Gamma_c$. We now claim that the function $F$ is $\mathcal C^1$ in $(L,1)$.
	
	Let $c \in (L, 1)$ and $x \in \Gamma_c$. For some $\delta >0$, define $ \sigma: (-\delta, \delta) \to \R^2$ as the solution of
	\[
	\begin{cases}  \sigma'(t)= \nabla u (\sigma(t)), & t \in (-\delta, \delta),\\
	\sigma(0)=x. &  \end{cases} 
	\]
	Since $\nabla u(x) \not= 0$, we have that $g'(0)>0$, where $g= u \circ \sigma$. By taking a smaller $\delta>0$ if necessary and suitable $\e_1,\e_2 >0$, we have that $g: (-\delta, \delta) \to (c-\e_1, c + \e_2 )$ is a $\mathcal C^1$ diffeomorphism. Hence we can write the function $F$ as:
	\[
	F_{|(c-\e_1, c+\e_2)}= - B \circ \sigma \circ g^{-1}.
	\]
	Since $B$ is of class $\mathcal C^1$ in $\Omega$, then $F_{|(c-\e_1, c+\e_2)}$ is a $\mathcal C^1$ function.
	
Again by~\eqref{Eu2}, we have:
\[\omega \nabla u = \nabla B = -F'(u) \nabla u  \quad \mbox{in } \RD \setminus \mathbf{S},
\]
and this implies that $- \Delta u = - \omega = f(u)$ in $\RD \setminus \mathbf{S}$, where $f(t)=F'(t)$.
	
\medskip 

\noindent {\bf Step 3. The function $f(t)$ can be continuously extended to $t=1$ and $-\Delta u = f(u)$ in $\R^2$.}

\medskip 	
	
Take $x \in \R^2 \setminus \mathbf S$, and $y \in \mathbf S$ minimizing the distance to $x$. In this way, the segment $[x,y]$ intersects $\mathbf S$ only at $y$. Take any sequence $(t_n)_{n\in\N}\subseteq(L,1)$ such that $t_n\to1$ as $n\to+\infty$. Then, for sufficiently large $n\in\N$, there exists $y_n\in [x,y]$ such that $u(y_n)=t_n$. Any accumulation point of $(y_n)_{n \in \N}$ must belong to $[x,y]$ (because of compactness) and to $\mathbf S$ (since $u(y_n) \to 1)$. Then, $y_n\to y \in\mathbf S$ as $n\to+\infty$. By the continuity of $\Delta u$, we have
		\[
		-\Delta u(y)=-\lim_{n\to+\infty}\Delta u(y_n)=\lim_{n\to+\infty} f(t_n).
		\]
Then we can define $f(1)= -\Delta u(y)$, and $f$ is continuous at $t=1$ by definition. 

In order to conclude the proof of this step we need to show that $-\Delta u \equiv f(1)$ in $\mathbf S$. Given $z\in \partial \mathbf S$  there exists $(z_n)_{n\in\N}\subseteq\R^2\setminus\mathbf S$ such that $z_n\to z$ as $n\to+\infty$. By continuity, $u(z_n)=s_n \to 1$ as $n\rightarrow +\infty$. By continuity of $f$ and $\Delta u$, we have: 
\[
-\Delta u(z) \leftarrow -\Delta u(z_n) = f(u(z_n)) = f(s_n) \to f(1).
\]
		
This already shows the result if $\mathring{\mathbf{S}} = \emptyset$ (here we denote by $\mathring{A}$ the interior of a set $A$).	Instead, if $\mathring{\mathbf{S}} \not=\emptyset$, then for all $z\in\mathring{\mathbf{S}}$ it is immediate to show that $-\Delta u(z)=0$. Moreover, take $\bar z\in  \partial \mathring{\mathbf S}$:  by continuity of $\Delta u$ we conclude $\Delta u(\bar z)=0$. But $\bar z \in \partial \mathbf S$ and  by the above argument $-\Delta u(\bar{z}) = f(1)$. Hence, if $\mathring{\mathbf{S}} \not=\emptyset$ then $f(1)=0$ and the conclusion holds.

\end{proof}

\begin{remark} There are many papers in the literature where a key point is to pass from solutions to~\eqref{Eu} to semilinear elliptic problems, see for instance~\cite{CFZ24, EHSX24, HN17, HN18, HN19, HN23, Ru23,  WZ23}. As far as we know, all those require at least $\mathcal C^2$ regularity on the vector field $v$. Instead, we have been able to prove it for $\mathcal C^1$ regularity of the vector field; the key idea for this is to use the Bernoulli function rather than the vorticity.

Moreover, we only assume connectedness of the stagnation set, contrarily to many other works in the field. In particular, the argument above could be applied in the framework of~\cite{Ru23}: the statement of Theorem A there holds for $\Omega = \Omega_ 0 \setminus \mathbf{S}$, where $\Omega$ is a $\mathcal C^0$ simply connected bounded domain and $\mathbf{S} \subseteq \Omega_0$ is a connected set.
\end{remark}

\section{Continuous Steiner symmetrization}\label{sec:brock}

In this section we recall the definition and basic properties of the continuous Steiner symmetrization introduced by Brock in~\cite{Bro95,Bro00}.

\begin{definition}[see Theorem 2.1 in~\cite{Bro00} and Definition 2.10 in~\cite{CHVY19}]
For any measurable set $M$ in $\R$ with finite measure, we define its {\em continuous Steiner symmetrization} $M^t$ for any $t \geq 0$, as follows.
\begin{enumerate}
\item If the measurable
set is an open interval of $\R$, i.e.~$I= (a,b)$ with $a,b \in \R$, $a<b$, we define 
\[
I^t = (a^t, b^t),\]
with 
\[
a^t = \frac{1}{2}\left[a-b + e^{-t}(a+b)\right] \quad \text{and} \quad 
b^t = \frac{1}{2}\left[b-a + e^{-t}(a+b)\right].
\]
\item If $M = \bigcup_{k=1}^{m} I_k$, where $I_k = (a_k, b_k)$ are disjoint bounded intervals, we define
\[
M^t = \bigcup_{k=1}^{m} I_k^t,
\]
where $0 \leq t < t_1$, and $t_1$ is the first value such that two intervals $I_{k_1}^{t_1}$ and $I_{k_2}^{t_1}$ share a common endpoint. When this happens, we merge these intervals into a single open interval and repeat the process.

\item If $M = \bigcup_{k=1}^{+\infty} I_k$, where $I_k = (a_k, b_k)$ are disjoint bounded intervals, we define $M_j = \bigcup_{k=1}^{j} I_k$ for all $j \in \mathbb{N}$, and
\[
M^t = \bigcup_{j=1}^{+\infty} M_j^t.
\]

\item Let $M$ be any measurable set with $|M| < +\infty$. Then by the continuity from above of the Lebesgue measure, there exists a sequence of open sets $(A_k)_{k\in\N}\subseteq\R$ such that $M = \bigcap_{k=1}^{+\infty} A_k$ and $A_k \supseteq A_{k+1}$. Then
\[
M^t = \bigcap_{k=1}^{+\infty} A_k^t.
\]
\end{enumerate}
\end{definition}

\begin{remark} For any measurable set $M$ in $\R$ with finite measure, its continuous Steiner symmetrization $M^t$ satisfies the following properties for any $0\leq t \leq +\infty$, see~\cite[Theorem 8]{Bro95}:
    \begin{enumerate}
        \item $|M^t|=|M|$ (equimeasurabilty);
        \item If $M \subseteq N$, then $M^t \subseteq N^t$ (monotonicity);
        \item $(M^t)^s=M^{t+s}$ (semigroup property);
        \item $M^0=M$ and $M^\infty=M^*$, where $M^*$ is the Steiner symmetrization of $M$, that is
        \[
        M^*=\left(-\frac12\abs{M},\frac12\abs{M}\right).
        \]
    \end{enumerate}
\end{remark}
We can now define the continuous Steiner symmetrization of a measurable set $M\subseteq\R^N$  with respect to the direction $x_1$.
\begin{definition}
Given any measurable set $M\subseteq\R^N$ with $|M| < +\infty$, we define its {\em continuous Steiner symmetrization} in the direction $x_1$, as follows
\[
M^t = \left\{ x=(x_1,x') \in \mathbb{R}^N : x_1 \in [M(x')]^t, \, x' \in \mathbb{R}^{N-1} \right\},
\]
where $M(x') = \left\{ x_1 \in \mathbb{R} : x=(x_1,x') \in M \right\}$. Analogously one can define the continuous Steiner symmetrization with respect to any vector $\eta \in \R^N$, $|\eta | =1$. 
\end{definition}

To define the continuous Steiner symmetrization of a function, we restrict ourselves to the following class of functions:
\[
\mathcal{S}(\mathbb{R}^N) = \left\{ u: \mathbb{R}^N \to \mathbb{R}\,: \, u \text{ measurable}, \, \left| \{u > c\} \right| < +\infty, \, \forall c > \inf u \right\},
\]
where we write $\{u > c\}=\{x\in\R^N:u(x)>c\}$.
Note that $\inf u=-\infty$ is allowed. We also define:
\[
 \mathcal{S}_+(\mathbb{R}^N) = \left\{ u \in \mathcal{S}(\mathbb{R}^N): \ u \geq 0 \right \}.
 \]

\begin{definition}
   For any $u \in \mathcal{S}(\mathbb{R}^N)$ we define its {\em continuous Steiner symmetrization} $u^t$, for any $t \geq 0$, with respect to any direction $\eta \in \R^N$, $|\eta | =1$, as follows:
\[
u^t(x) = 
\begin{cases}
\sup \{ c : x \in \{u > c\}^t \} & \text{if } x \in \bigcup_{c > \inf u} \{u > c\}^t \\
\inf u & \text{if } x \not\in \bigcup_{c > \inf u} \{u > c\}^t.
\end{cases}
\]
\end{definition}

\begin{remark} The following properties hold true:
\begin{enumerate}
    \item $u^0=u$ and $u^\infty=u^*$, where $u^*$ is the Steiner symmetrization of $u$ with respect to $x_1$.
    \item If $u$ is continuous, then $u^t$ is continuous, see~\cite[Theorem 11]{Bro95}.
    \item If $u$ is Lipschitz continuous with constant $\mathrm{Lip}(u)$, then $u^t$ is Lipschitz with $\mathrm{Lip}(u^t)\le\mathrm{Lip}(u)$, see~\cite[Theorem 11]{Bro95}. 
\end{enumerate}
\end{remark}

In the following proposition, we collect some properties (slightly generalized) of~\cite{Bro95} about continuous Steiner symmetrization of functions that will be useful in the sequel.

\begin{proposition}
\label{prop:steiner}
Let $u,v \in \mathcal{S}(\mathbb{R}^N)$, $c \in \mathbb{R}$. Then the following properties hold:
\begin{enumerate}
\item $(u + c)^t = u^t + c$;
\item $\{u > c\}^t = \{u^t > c\}$;
\item if $u\le v$, then $u^t\le v^t$ (monotonicity);
\item if $F$ is a continuous function and $F(u)\in L^1(\R^N)$, then $F(u^t) \in L^1(\R^N)$ and
    \[
    \int_{\R^N}F(u)\,dx=\int_{\R^N}F(u^t)\,dx \quad\text{(Cavalieri's principle)};
    \]
\item if $G$ is a continuous non-decreasing function, then $[G(u)]^t = G(u^t)$.
\end{enumerate}
Note that since we are considering just measurable functions, the preceding properties are intended to hold up to a set of zero Lebesgue measure.
\end{proposition}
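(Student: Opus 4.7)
My plan is to establish the five properties in the order (1), (2), (3), (5), (4). Property (2) is the key identity connecting the function-level and set-level definitions, and once it is in hand the other properties follow with little extra work, except for (4) which is the main obstacle. All arguments rest on the defining formula
$$u^t(x)=\sup\bigl\{c : x\in\{u>c\}^t\bigr\}$$
and the set-level properties (equimeasurability, monotonicity, semigroup) already recorded after the set-level definition.

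For (1), the identity $\{u+c>s\}=\{u>s-c\}$ yields $(u+c)^t=u^t+c$ directly from the definition. For (2), the inclusion $\{u>c\}^t\subseteq\{u^t>c\}$ is immediate, while for the reverse one uses the set-level monotonicity: if $u^t(x)>c$, pick $s>c$ with $x\in\{u>s\}^t$; since $\{u>s\}\subseteq\{u>c\}$, this gives $x\in\{u>c\}^t$. Then (3) follows because $u\le v$ implies $\{u>c\}\subseteq\{v>c\}$ and $\inf u\le\inf v$, whence $\{u^t>c\}\subseteq\{v^t>c\}$ by (2) combined with the set-level monotonicity, so $u^t\le v^t$ a.e. For (5), when $G$ is continuous and non-decreasing, $\{G(u)>c\}=\{u>g(c)\}$ with $g(c):=\sup\{s:G(s)\le c\}$, and applying (2) twice shows that $[G(u)]^t$ and $G(u^t)$ share the same super-level sets and therefore coincide a.e.

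The main obstacle is (4), since $F$ is neither monotone nor sign-definite and $\inf u$ may equal $-\infty$, so a direct layer-cake computation is not available. From (2) together with the set-level equimeasurability one first gets
$$\bigl|\{u>c\}\bigr|=\bigl|\{u^t>c\}\bigr|\qquad\text{for every }c>\inf u,$$
so that $u$ and $u^t$ have the same distribution on $(\inf u,+\infty)$. Splitting $F=F^+-F^-$ into its continuous nonnegative parts and applying the layer-cake formula
$$\int_{\R^N}F^{\pm}(u)\,dx=\int_0^{+\infty}\bigl|\{F^{\pm}(u)>s\}\bigr|\,ds,$$
I decompose each open set $\{F^{\pm}>s\}\subseteq\R$ into countably many disjoint open intervals and read off $|\{F^{\pm}(u)>s\}|=|\{F^{\pm}(u^t)>s\}|$ from the distributional identity above (on intervals entirely above $\inf u$). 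Integrating in $s$ transfers each $F^{\pm}$-integral, and hence the $F$-integral, between $u$ and $u^t$. The delicate point is the possibly infinite-measure level set $\{u=\inf u\}$, which can be nonempty only when $\inf u>-\infty$: in that case the hypothesis $F(u)\in L^1$ forces $F(\inf u)=0$ whenever this set has infinite measure, and $\inf u^t=\inf u$ combined with the equimeasurability above gives $|\{u^t=\inf u^t\}|=|\{u=\inf u\}|$, so the corresponding contributions to both integrals vanish. Finally, running the same argument with $|F|$ in place of $F$ yields $F(u^t)\in L^1$, completing the proof.
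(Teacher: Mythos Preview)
The paper's own proof is almost entirely a citation: (1)--(4) are attributed to \cite[Theorems~5 and~8]{Bro95}, and the only argument actually given is the extension of (5) from strictly increasing to non-decreasing $G$, which coincides with yours (write $G^{-1}((d,\infty))=(d',\infty)$ by continuity and monotonicity, then apply (2) on each side). So your self-contained treatment---in particular the layer-cake proof of (4) with its handling of the possibly infinite-measure level $\{u=\inf u\}$---goes well beyond what the paper supplies and is a genuinely different, more elementary route.

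One correction to your sketch of (2): the inclusion $\{u>c\}^t\subseteq\{u^t>c\}$ is \emph{not} immediate. From $x\in\{u>c\}^t$ the definition only gives $u^t(x)\ge c$, not $u^t(x)>c$. The inclusion that does hold pointwise is the reverse one, $\{u^t>c\}\subseteq\{u>c\}^t$, and that is precisely what your ``set-level monotonicity'' argument proves. Equality up to a null set---which is all the proposition claims---then follows from equimeasurability: on one hand $|\{u>c\}^t|=|\{u>c\}|$, and on the other $\{u^t>c\}=\bigcup_{s>c}\{u>s\}^t$ gives $|\{u^t>c\}|=\lim_{s\downarrow c}|\{u>s\}|=|\{u>c\}|$. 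With (2) fixed in this way, the rest of your chain (3), (5), (4) goes through.
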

\begin{proof}
   Assertions (1),(2), and (3) are proved in~\cite[Theorem 5]{Bro95}, while (4) and (5) are proved in~\cite[Theorem~8]{Bro95}. The only difference is that in~\cite[Theorem~8]{Bro95} the function $G$ is strictly increasing: a comment is in order here.

    If $G$ is continuous and non-decreasing, then for all $d < \sup \, G$, there exists $d'\in \R$ such that $G^{-1}((d,+\infty))=(d',+\infty)$. Then we can use property (2) to obtain:
    \[
    \{G(u)^t>d\}=\{G(u)>d\}^t=\{u>d'\}^t,
    \]
    On the other hand
    \[
    \{G(u^t)>d\}=\{u^t>d'\}=\{u>d'\}^t,
    \]
    and the claim follows taking into account that two functions are equal a.e.~if their superlevel sets coincide a.e..

\end{proof}

An important property of many types of rearrangements is the so-called Pólya-Szegő inequality. This is contained in~\cite[Theorem 3.2]{Bro00} for $u \in H^1(\R^N) \cap \mathcal S_+(\R^N)$; however, we will need it in a more general setting, as stated below in Proposition~\ref{lemma:polya}.

Some preliminaries are in order. 
In what follows, given any $m \in \R$, we define:
\begin{equation}\label{def:Gn} 
	G_m(s)= \max \{ (s-m), 0\}, \ \ H_m(s)= \min\{s,m\}.
\end{equation}
Observe that both functions are continuous and non-decreasing.

\begin{lemma} \label{new}
Given any function $u \in H^1_{\mathrm{loc}}(\R^N) \cap \mathcal S(\R^N)$, $m \in (\inf \, u, \sup \, u )$, we have that:

\begin{enumerate} 
	\item  $G_m(u) \in H_{loc}^1(\R^N) \cap \mathcal S_+(\R^N)$ and $H_m(u) \in H_{loc}^1(\R^N) \cap \mathcal S(\R^N)$;
	\item $u= G_m(u) + H_m(u)$;
	\item If $\nabla u \in L^2(\R^N)$, then $G_m(u) \in H^1(\R^N)$;
	\item $G_m(u^t)= G_m(u)^t$ and $H_m(u^t)= H_m(u)^t$;
	\item $ |\nabla G_m(u) | \cdot |\nabla H_m(u) |=0$,  a.e.~in $\R^N$. 
\end{enumerate}
\end{lemma}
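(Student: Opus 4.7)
The plan is to verify the five claims in order; each is a relatively direct consequence of the Stampacchia chain rule together with the properties of continuous Steiner symmetrization already recorded in Proposition~\ref{prop:steiner}.

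For (1) and (2): both $G_m$ and $H_m$ are Lipschitz continuous, so Stampacchia's composition theorem yields $G_m(u),H_m(u)\in H^1_{\mathrm{loc}}(\R^N)$ together with the a.e.\ identities
\[
\nabla G_m(u)=\nabla u\,\chi_{\{u>m\}},\qquad \nabla H_m(u)=\nabla u\,\chi_{\{u\le m\}}.
\]
Non-negativity of $G_m(u)$ is immediate. I would read off the $\mathcal S$ property by inspecting the superlevel sets: for $c>0$ one has $\{G_m(u)>c\}=\{u>m+c\}$, of finite measure since $m+c>\inf u$; for $c\in(\inf u,m)$ one has $\{H_m(u)>c\}=\{u>c\}$, likewise finite, while $\{H_m(u)>c\}=\emptyset$ for $c\ge m$. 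Claim (2) is a pointwise identity, verified by splitting on $\{u>m\}$ versus $\{u\le m\}$.

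Claims (4) and (5) come next, essentially for free. Since $G_m$ and $H_m$ are continuous and non-decreasing, assertion (5) of Proposition~\ref{prop:steiner} gives $[G_m(u)]^t=G_m(u^t)$ and $[H_m(u)]^t=H_m(u^t)$, which is (4). For (5), the two gradient formulas above are supported on the disjoint sets $\{u>m\}$ and $\{u\le m\}$, so their pointwise moduli multiply to zero almost everywhere.

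The heart of the matter is (3). The gradient estimate $|\nabla G_m(u)|\le|\nabla u|\in L^2(\R^N)$ is immediate from the Stampacchia formula; what remains is to show $G_m(u)\in L^2(\R^N)$, exploiting that $G_m(u)$ is supported on $\{u>m\}$, which has finite measure by the $\mathcal S$ condition. I would work with the truncations $\phi_k=\min\{G_m(u),k\}\in H^1(\R^N)\cap L^\infty(\R^N)$, whose supports still sit inside $\{u>m\}$, and combine a Gagliardo--Nirenberg--Sobolev inequality with a Hölder step on the support to obtain a bound of the shape
\[
\|\phi_k\|_{L^2(\R^N)}\le C\,|\{u>m\}|^{\alpha_N}\,\|\nabla G_m(u)\|_{L^2(\R^N)},
\]
uniform in $k$, for some exponent $\alpha_N>0$; monotone convergence then delivers $G_m(u)\in L^2(\R^N)$. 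The only genuine obstacle is precisely this last step in dimension $N=2$, where there is no direct embedding $\dot H^1\hookrightarrow L^{2^*}$: one cannot simply invoke Sobolev and Hölder in a single line as for $N\ge 3$, which is why the truncation argument is needed.
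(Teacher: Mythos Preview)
Your treatment of (1), (2), (4), (5) matches the paper's: it too declares (1) and (2) immediate, invokes Proposition~\ref{prop:steiner}(5) for (4), and for (5) just notes that $\nabla u=0$ a.e.\ on $\{u=m\}$.

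For (3) the routes differ. The paper does not truncate: it passes to the quasi-continuous representative of $u$, so that $\{u>m\}$ is quasi-open, observes that $G_m(u)$ vanishes on its boundary in the trace sense, and then quotes a Poincar\'e inequality valid on quasi-open sets of finite measure (citing \cite[Lemma~4.5.3]{HPbook}) to obtain $G_m(u)\in L^2(\R^N)$ in one stroke, with no dimension split. Your approach is more hands-on and avoids capacity theory entirely, at the cost of treating $N=2$ separately. One point you should sharpen: truncation by itself does not resolve the two-dimensional case. What actually closes the argument there is the endpoint Gagliardo--Nirenberg--Sobolev inequality $\|\phi_k\|_{L^2(\R^2)}\le C\|\nabla\phi_k\|_{L^1(\R^2)}$, followed by H\"older applied to the \emph{gradient} (supported in $\{u>m\}$) to pass from $L^1$ to $L^2$ at the price of the factor $|\{u>m\}|^{1/2}$. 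The role of the truncation is only to guarantee $\phi_k\in W^{1,1}(\R^2)$, so that this inequality is available. With that clarification, your argument goes through and yields the same bound as the paper's Poincar\'e route.
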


\begin{proof}
	Statements (1) and (2) are  immediate. Regarding (3), since $u  \in H^1_{\mathrm{loc}}(\R^N)$, then we can consider its quasi-continuous representative for which  its superlevel sets are quasi-open. It is clear that $\nabla G_m(u) \in L^2(\R^N)$. Moreover, $G_m(u)=0$ on $\partial \{ u >m\}$ in the trace sense. Recall that the set $\{ u >m\}$ has finite measure: by Poincaré inequality (see for instance \cite[Lemma 4.5.3]{HPbook}) $G_m(u) \in L^2(\R^N)$.
	
	The assertion (4) follows immediately from property (5) in Proposition~\ref{prop:steiner}. Finally, statement (5) follows since $\nabla u=0$ a.e.~in the set $\{u =m\}$. 
	
\end{proof}

We are now ready to prove our version of the Pólya-Szegő inequality for the continuous Steiner symmetrization:

\begin{proposition}
\label{lemma:polya}
    Let $u\in H^1_{\mathrm{loc}}(\R^N) \cap \mathcal S(\R^N)$ with $\nabla u\in L^2(\R^N)$. Then $u^t\in H^1_{\mathrm{loc}}(\R^N) \cap \mathcal S(\R^N)$ and
    \[
    \int_{\R^N}\abs{\nabla u}^2\,dx\ge\int_{\R^N}\abs{\nabla u^t}^2\,dx.
    \]
\end{proposition}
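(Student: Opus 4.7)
My strategy is to reduce the statement to Brock's original Pólya--Szegő inequality (Theorem 3.2 in~\cite{Bro00}), which only applies to nonnegative $H^1$ functions in $\mathcal S_+(\R^N)$, by exploiting the truncation decomposition from Lemma~\ref{new}. First, the fact that $u^t \in \mathcal S(\R^N)$ is free: by property (2) of Proposition~\ref{prop:steiner} and equimeasurability, $|\{u^t > c\}| = |\{u > c\}| < +\infty$ for every $c > L := \inf u$.

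For the main inequality, I would fix $m \in (L, \sup u)$ and apply Brock's result to $G_m(u) = (u-m)_+$, which by Lemma~\ref{new}(1),(3) belongs to $H^1(\R^N) \cap \mathcal S_+(\R^N)$. Combined with the identity $G_m(u^t) = [G_m(u)]^t$ from Lemma~\ref{new}(4), this yields $G_m(u^t) \in H^1(\R^N)$ together with
\[
\int_{\R^N} |\nabla G_m(u^t)|^2 \, dx \;\leq\; \int_{\R^N} |\nabla G_m(u)|^2 \, dx \;=\; \int_{\{u > m\}} |\nabla u|^2 \, dx \;\leq\; \|\nabla u\|_{L^2(\R^N)}^2,
\]
where I used that the weak gradient of a Sobolev function vanishes a.e.\ on its level set $\{u=m\}$, so $|\nabla G_m(u)|^2 = |\nabla u|^2 \chi_{\{u > m\}}$ a.e.

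Next I would construct the weak gradient of $u^t$ and show $u^t \in H^1_{\mathrm{loc}}(\R^N)$. Pointwise, $u^t$ is well-defined from Brock's construction; on $\{u^t > m\}$ one has $u^t = G_m(u^t) + m$, so the natural definition is $\nabla u^t := \nabla G_m(u^t)$ on $\{u^t > m\}$. Taking a sequence $m_k \searrow L$, the sets $\{u^t > m_k\}$ form an exhaustion of $\{u^t > L\}$ and two choices differ only by an additive constant, so the definition is consistent; on the set $\{u^t = L\}$ (empty up to null sets when $L = -\infty$) I set $\nabla u^t = 0$. The local $L^2$ integrability of $u^t$ itself follows from the $H^1$-control on $G_{m_0}(u^t)$ combined with the pointwise inequality $0 \leq (u^t - L) - G_{m_0}(u^t) \leq m_0 - L$ when $L > -\infty$; in the case $L = -\infty$, local integrability is obtained via the family $\{G_m(u^t)\}_{m \in \R}$ together with the finiteness of $|\{u^t > c\}|$ for all $c \in \R$.

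Finally, using $|\nabla G_{m_k}(u^t)|^2 = |\nabla u^t|^2 \chi_{\{u^t > m_k\}}$ a.e., monotone convergence gives
\[
\int_{\R^N} |\nabla u^t|^2 \, dx = \lim_{k \to \infty} \int_{\{u^t > m_k\}} |\nabla u^t|^2 \, dx \leq \lim_{k \to \infty} \int_{\{u > m_k\}} |\nabla u|^2 \, dx = \int_{\R^N} |\nabla u|^2 \, dx,
\]
which is the asserted inequality. The main obstacle I anticipate is Step~3, namely justifying that the gradients of the truncations $G_{m_k}(u^t)$ on expanding subdomains can be glued into a genuine weak gradient of $u^t$ on all of $\R^N$; this is particularly delicate when $L = -\infty$, since there the natural approximations need not converge in $L^2$ on any given ball, and one must argue locally through superlevel sets rather than by a uniform truncation.
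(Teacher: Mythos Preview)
Your proposal is correct and follows essentially the same route as the paper: truncate via $G_m(u)=(u-m)_+$, apply Brock's Pólya--Szeg\H{o} inequality from~\cite{Bro00} to this $H^1\cap\mathcal S_+$ function, use $G_m(u)^t=G_m(u^t)$, and pass to the limit $m\to\inf u$ by monotone convergence. You are in fact more careful than the paper about the $u^t\in H^1_{\mathrm{loc}}$ claim (the gluing of the weak gradients of $G_m(u^t)$), which the paper's proof leaves implicit.
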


\begin{proof}
    Take $m \in (\inf \, u, \sup \, u)$ arbitrary. Observe that since $G_m(u) \in H^1(\R^N) \cap \mathcal S_+(\R^N)$, we can use  \cite[Theorem 3.2]{Bro00} to obtain:
    \[
    \int_{\R^N}\abs{\nabla G_m(u)}^2\,dx \geq \int_{\R^N}\abs{\nabla G_m(u)^t}^2\,dx.
    \]
    Taking into account property (5) in Lemma~\ref{new}, we have: 
    \begin{align*}
    \int_{\R^N}\abs{\nabla u}^2\,dx& =\int_{\R^N}\abs{\nabla (G_m(u) + H_m(u))}^2\,dx\\
    &   =\int_{\R^N} (\abs{\nabla G_m(u)}^2 + \abs{\nabla H_m(u)}^2)\,dx\\
    & \geq  \int_{\R^N}\abs{\nabla G_m(u)}^2\,dx \\
    &\geq \int_{\R^N}\abs{\nabla G_m(u)^t}^2\,dx = \int_{\R^N}\abs{\nabla G_m(u^t)}^2\,dx.
    \end{align*}
    
  We conclude since, by the monotone convergence theorem, we have
      \[
  \int_{\R^N}\abs{\nabla G_m(u^t)}^2\,dx \to \int_{\R^N}\abs{\nabla u^t}^2\,dx,\quad\text{as }m\to \inf  u.
    \]
       
\end{proof}

In general, an important question in symmetric rearrangements is the symmetry implications of the equality case in the Pólya-Szegő inequality. In the case of the continuous Steiner symmetrization we cannot conclude radial symmetry because of the possible presence of plateaus. This motivates the following definition:

\begin{definition}[see Theorem 6.1 in~\cite{Bro00}] \label{locallyrad}
	We say that a function $u\in\mathcal C^1(\R^2)$ is \emph{locally symmetric in any direction} if
	\[
	\R^N\setminus\mathbf S=\bigcup_{k\in K}A_k,
	\]
	where
	\begin{enumerate}
		\item $\mathbf S=\{x\in\R^2:\nabla u(x)=0\}$,
		\item $K$ is countable,
		\item $A_k$ are disjoint open annuli $A_{q_k}(r_k,R_k)$, ($0 \leq r_k < R_k \leq +\infty$) and $u$ is radially symmetric and strictly radially decreasing in each domain $A_k$.
	\end{enumerate}
\end{definition}

The following result is an adaptation to our setting of \cite[Theorem 6.2]{Bro00}, where it is proved for positive functions in a Sobolev Space. This proposition will be the essential tool to prove the radial symmetry result of Theorem~\ref{main:thm}.

\begin{proposition}
\label{prop:loc:symm}
Let $u \in  \mathcal{C}^1(\R^N) \cap  \mathcal S(\R^N)$ with $\nabla u \in L^2(\R^N)$. Assume that, for any direction $\eta \in \R^{N}$, $|\eta|=1$, the corresponding continuous Steiner symmetrized function $u^t$ satisfies:
	\begin{equation} \label{limite}
	 \lim_{t \to 0^+} \frac{1}{t} \displaystyle \int_{\R^N} (|\nabla u(x)|^2 - |\nabla u^t(x)|^2)\, dx =0.
	 \end{equation}
Then $u$ is locally symmetric in any direction.	

\end{proposition}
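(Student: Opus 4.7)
The plan is to reduce to Brock's Theorem~6.2 in~\cite{Bro00}, which gives the same conclusion for functions in $H^1(\R^N)\cap\mathcal S_+(\R^N)$, by truncating $u$ from below with the functions $G_m$ from~\eqref{def:Gn} and then gluing the resulting local decompositions as $m\to\inf u$.

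Fix a direction $\eta$ and $m\in(\inf u,\sup u)$. By Lemma~\ref{new}, $G_m(u)\in H^1(\R^N)\cap\mathcal S_+(\R^N)$, $H_m(u)\in H^1_{\mathrm{loc}}(\R^N)\cap\mathcal S(\R^N)$ with $\nabla H_m(u)\in L^2(\R^N)$, and $(G_m(u))^t=G_m(u^t)$, $(H_m(u))^t=H_m(u^t)$. Proposition~\ref{lemma:polya} ensures $u^t\in H^1_{\mathrm{loc}}(\R^N)\cap\mathcal S(\R^N)$ with $\nabla u^t\in L^2(\R^N)$, so the orthogonality of Lemma~\ref{new}(5) applied to both $u$ and $u^t$ gives $|\nabla u|^2=|\nabla G_m(u)|^2+|\nabla H_m(u)|^2$ and an identical splitting for $u^t$, a.e. Applying Proposition~\ref{lemma:polya} separately to $G_m(u)$ and $H_m(u)$, both Pólya–Szegő deficits are non-negative and sum to $\int(|\nabla u|^2-|\nabla u^t|^2)\,dx$; hypothesis~\eqref{limite} therefore forces each to be $o(t)$ as $t\to 0^+$. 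In particular, $G_m(u)\in H^1(\R^N)\cap\mathcal S_+(\R^N)$ satisfies Brock's condition in every direction, so Theorem~6.2 in~\cite{Bro00} yields its local symmetry. Since $G_m(u)=u-m$ on $\{u>m\}$ and $\{\nabla G_m(u)=0\}\cap\{u>m\}=\mathbf S\cap\{u>m\}$, this translates into a countable decomposition of $\{u>m\}\setminus\mathbf S$ into disjoint open annuli on which $u$ is radially symmetric and strictly radially decreasing.

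To glue, note that $\inf u$ cannot be attained on $\R^N\setminus\mathbf S$ (otherwise $\nabla u$ would vanish there). Fix $x_0\in\R^N\setminus\mathbf S$ and a decreasing sequence $m_n\to\inf u$ with $m_n<u(x_0)$. Then $x_0$ belongs to a unique annulus $D_n=A_{q_n}(r_n,R_n)$ of the $m_n$-decomposition, and the $D_n$ are nested and increasing. On the overlap $D_n\cap D_{n+1}$, $u$ is strictly radially decreasing around both $q_n$ and $q_{n+1}$; since $\nabla u(x)$ is parallel to both $x-q_n$ and $x-q_{n+1}$ for every $x$ in the overlap, $q_n=q_{n+1}$. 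Hence all $q_n$ equal a common $q$, and $A(x_0):=\bigcup_n D_n=A_q(r,R)$ is an annulus on which $u$ is radially symmetric and strictly radially decreasing. Applying the same argument to a hypothetical boundary point of $A(x_0)$ lying in $\R^N\setminus\mathbf S$ would enlarge $A(x_0)$ around the same centre $q$, contradicting its maximality. Hence $A(x_0)$ is a connected component of $\R^N\setminus\mathbf S$, and varying $x_0$ yields the required countable annular decomposition.

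The main obstacle is the gluing step: verifying that the centres $q_n$ truly stabilise and that the limiting annulus exhausts an entire connected component of $\R^N\setminus\mathbf S$. The reduction to Brock's theorem is essentially bookkeeping, once the orthogonal splitting of $|\nabla u|^2$ (together with the analogous splitting for $u^t$) and the non-negativity of each individual Pólya–Szegő deficit are in place.
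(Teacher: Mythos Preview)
Your approach coincides with the paper's: truncate with $G_m$, use the orthogonal gradient splitting together with Proposition~\ref{lemma:polya} to pass~\eqref{limite} down to $G_m(u)$, apply Brock's Theorem~6.2, and then let $m\to\inf u$. The paper is in fact terser on the final step (it simply writes ``since $m$ is arbitrary, we conclude that $u$ is locally symmetric in any direction''), so your annulus-nesting and centre-stabilisation paragraph supplies detail that the paper leaves implicit.
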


\begin{proof}

Take $m \in (\inf \, u, \sup \, u)$ and decompose $u= G_m(u) + H_m(u)$. Observe that $G_m(u)$ is of class $\mathcal{C}^1$ in the set $\{m < u < \sup u\}$, so that it satisfies the assumptions of \cite[Theorem 6.2]{Bro00}.

As in the proof of Proposition~\ref{lemma:polya}, we can decompose:
\[
\begin{split}
&\int_{\R^N} \big( \abs{\nabla u}^2-\abs{\nabla u^t}^2\big) \,dx \\ 
& \quad = \int_{\R^N} \big (\abs{\nabla G_m(u)}^2 + \abs{\nabla H_m(u)}^2-\abs{\nabla G_m(u^t)}^2 - \abs{\nabla H_m(u^t)}^2 \big ) \, dx\\
&\quad =\int_{\R^N} \big (\abs{\nabla G_m(u)}^2 - \abs{\nabla G_m(u)^t}^2 \big ) \, dx + \int_{\R^N} \big ( \abs{\nabla H_m(u)}^2 - \abs{\nabla H_m(u)^t}^2 \big ) \, dx\\ 
&\quad \geq  \int_{\R^N} \big (\abs{\nabla G_m(u)}^2 - \abs{\nabla G_m(u)^t}^2 \big ) \, dx, 
\end{split}
\]
where the last inequality is due to Proposition~\ref{lemma:polya} applied to the function $H_m(u)$. Then, by applying such result to $G_m(u)$ and using~\eqref{limite}, we obtain:
\[ 
0 \leq \frac 1 t \int_{\R^N} \big (\abs{\nabla G_m(u)}^2 - \abs{\nabla G_m(u)^t}^2 \big ) \, dx \leq \frac{1}{t} \displaystyle \int_{\R^N} (|\nabla u(x)|^2 - |\nabla u^t(x)|^2)\, dx \to 0,
\]
as $t \to 0^+$. Then we can apply \cite[Theorem 6.2]{Bro00} to $G_m(u)$ to deduce that $G_m(u)$ is locally symmetric in any direction. Since $m$ is arbitrary, we conclude that $u$ is locally symmetric in any direction.

\end{proof}

We end up this section by recalling a quantitative estimate of the $L^2$ continuity of the Steiner symmetrization, valid in bounded domains, see \cite[Theorem 4.2]{Bro00}.

\begin{proposition}
	\label{prop:l2}
	Let $u \in H_0^1(B(R))$ be a nonnegative function, and $u^t$ its continuous Steiner symmetrization in the direction $x_1$. Then,
	\[
	\|u^t -u \|_{L^2(B(R))} \leq t R \| \partial_{x_1} u \|_{L^2(B(R))}.
	\]
\end{proposition}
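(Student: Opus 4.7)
The strategy is a reduction to the one-dimensional case via Fubini, combined with an explicit transport description of how the continuous Steiner symmetrization moves intervals.

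First, since the symmetrization acts only in the direction $x_1$, for a.e.\ $x' \in \R^{N-1}$ the slice $f_{x'}(x_1) := u(x_1, x')$ lies in $H^1(\R)$, is nonnegative, and is supported in $[-R, R]$; moreover, by the definition of the CSS in the direction $x_1$, one has $u^t(x_1, x') = (f_{x'})^t(x_1)$. By Fubini, the proposition reduces to the one-dimensional estimate
\[
\|f^t - f\|_{L^2(\R)}^2 \leq t^2 R^2 \|f'\|_{L^2(\R)}^2
\]
for every nonnegative $f \in H^1(\R)$ supported in $[-R, R]$.

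To establish this one-dimensional estimate, the geometric point I would use is that each connected component $(a,b)$ of a superlevel set $\{f > s\}$ is, as long as no merging with a neighbour has occurred, rigidly translated by $-(1 - e^{-t}) \tfrac{a + b}{2}$ under the CSS; since $(a,b) \subseteq (-R, R)$, this translation has modulus at most $(1 - e^{-t}) R \leq tR$. Combining this with the semigroup property $f^{t+h} = (f^t)^h$, the CSS admits, away from merging times, a formal transport identity $\partial_s f^s = v_s \cdot (f^s)'$, where $v_s$ is the midpoint of the component of $\{f^s > f^s(x_1)\}$ containing $x_1$, and hence $|v_s| \leq R$. Writing $f^t - f = \int_0^t \partial_s f^s \, ds$, applying Cauchy--Schwarz in $s \in (0, t)$, and using that the induced flow preserves Lebesgue measure (a manifestation of equimeasurability of the CSS), one obtains
\[
\|f^t - f\|_{L^2(\R)}^2 \leq R^2 t \int_0^t \|(f^s)'\|_{L^2(\R)}^2 \, ds.
\]
The Pólya--Szegő inequality (Proposition~\ref{lemma:polya} applied in one variable) then yields $\|(f^s)'\|_{L^2(\R)} \leq \|f'\|_{L^2(\R)}$ for every $s \geq 0$, and integrating in $s$ completes the one-dimensional bound.

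The main obstacle is making rigorous the transport interpretation, because the map $t \mapsto f^t$ is not smooth at the (countably many) instants when two components of a superlevel set merge, so $\partial_s f^s$ is only defined piecewise. The standard remedy, followed in \cite[Theorem 4.2]{Bro00}, is to first prove the estimate for simple functions $\sum_i c_i \chi_{A_i}$ whose superlevel sets are finite unions of open intervals with nondegenerate endpoints, where the merging times are discrete and each component's motion is explicit, and then to pass to the $L^2$-limit using the density of such functions in $H^1_0(B(R))$ together with the $L^2$-continuity of the CSS. Integrating the resulting one-dimensional bound over $x' \in \R^{N-1}$ gives the claim.
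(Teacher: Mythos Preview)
The paper does not give its own proof of this proposition; it simply records the statement and cites \cite[Theorem~4.2]{Bro00}. Your outline is essentially a reconstruction of Brock's argument there: slice in the $x'$-variables to reduce to one dimension, use that between merging times each component of a superlevel set is rigidly translated with speed equal to (minus) its current midpoint and hence bounded by $R$, establish the estimate first for functions whose superlevel sets are finite unions of intervals (so the merging times are discrete and the transport picture is rigorous piecewise), and then pass to general $H^1_0$ functions by density together with the $L^2$-continuity of the symmetrization. There is no genuine gap in your sketch.
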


\section{Radial symmetry for semilinear PDEs} \label{sec:radialsymmetry}
In this section we prove a symmetry result for semilinear elliptic equations, which will be used in the proof of Theorem~\ref{main:thm}. Even if some of the results below can be extended to higher dimension, we prefer to restrict ourselves to the case $N=2$, which is the scope of  Theorem~\ref{main:thm}.

The following is the main result of this section.

\begin{theorem}
    \label{thm:rad}
    Let  $u \in  H^2_{\mathrm{loc}}(\R^2)$ with $\nabla u\in L^2(\R^2)$ such that:
    \[
    \lim_{\abs{x}\to+\infty}u(x)=L\in[-\infty, \ + \infty),
    \]
    with  $u(x)>L$ for all $x \in \R^2$. Assume that 
    \begin{equation}\label{eq:weaksemilihelpeq}
    	-\Delta u=f(u),\quad\text{in }\R^2,
    \end{equation}
    where $f\in\mathcal C^0((L, \sup u])$ is monotone in an interval $(L, \LL)$, for some $\LL >L$. 
    
    Then $u$ is locally symmetric in any direction.  Moreover,
    \begin{enumerate}
    	\item[i)] $\displaystyle \lim_{t \to L} f(t)=0$, $f(u) \in L^1(\R^2)$ and
	\[
	\int_{\R^2} f(u) \, dx =0.
	\]
    	\item[ii)] $f$ is integrable in $(L, \sup u)$ and, defining $F(t)= \int_L^t f(s) \, ds$, we have that $F(u) \in L^1(\R^2)$ and 
	\[
	\int_{\R^2} F(u) \, dx =0.
	\]
    \end{enumerate}
\end{theorem}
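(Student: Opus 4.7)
The plan is to verify hypothesis~\eqref{limite} of Proposition~\ref{prop:loc:symm} in every direction $\eta$, from which local symmetry follows. The integrability statements (i) and (ii) are derived in parallel from the equation $-\Delta u = f(u)$ combined with $\nabla u \in L^2(\R^2)$. First, I fix $m \in (L, \LL)$: since $u > L$ and $u \to L$ at infinity, the superlevel $\{u > m\}$ is compact and $f(u)$ is bounded there. For the unbounded complement $\{u \leq m\}$, I test the equation against the non-negative cutoff $\chi_k(u) := \min((m - u)_+, k)$; integration by parts gives
\[
\int_{\R^2} \chi_k(u)\, f(u)\, dx = -\int_{\{m - k < u < m\}} |\nabla u|^2\, dx \le 0.
\]
Monotonicity of $f$ on $(L, \LL)$ forces $f$ to have constant sign on $(L, m)$. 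If $f \ge 0$, this identity forces $f \equiv 0$ on $(L, m)$; if $f \le 0$, monotone convergence as $k \to \infty$ yields $\int (m-u)_+ |f(u)|\, dx \le \|\nabla u\|_{L^2}^2$, and combined with $|\{u \le c\}| = +\infty$ for every $c > L$ this rules out $\lim_{t \to L^+} f(t) \ne 0$. In either case $\lim_{t \to L^+} f(t) = 0$ and $f(u) \in L^1(\R^2)$. Integrating $-\Delta u = f(u)$ over $B(R_n)$ with $R_n \to \infty$ chosen via $\liminf_{r \to \infty} r \int_{\partial B(r)} |\nabla u|^2\, d\sigma = 0$ (guaranteed by $\nabla u \in L^2$) gives $\int_{\R^2} f(u)\, dx = 0$, while a two-dimensional Pohozaev identity (whose bulk $|\nabla u|^2$ term drops out when $N = 2$) produces $F(u) \in L^1(\R^2)$ and $\int F(u)\, dx = 0$ once the boundary terms are verified to vanish along a subsequence.

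Next, for a fixed direction $\eta$ I form the CSS $u^t$. By Proposition~\ref{lemma:polya}, $\nabla u^t \in L^2(\R^2)$ with $\|\nabla u^t\|_{L^2} \le \|\nabla u\|_{L^2}$, and equimeasurability forces $u^t \to L$ at infinity. Using $u \in H^2_{\mathrm{loc}}$ and $f(u) \in L^1$, a truncation approximation of $u^t - u$ by compactly supported functions justifies the weak identity $\int \nabla u \cdot \nabla(u^t - u)\, dx = \int f(u)(u^t - u)\, dx$, and expanding $|\nabla u^t|^2 = |\nabla u + \nabla(u^t - u)|^2$ gives
\[
\int (|\nabla u|^2 - |\nabla u^t|^2)\, dx = -2 \int f(u)(u^t - u)\, dx - \int |\nabla(u^t - u)|^2\, dx. \qquad (\star)
\]
Cavalieri's principle (Proposition~\ref{prop:steiner}(4)) applied to $F$ gives $\int F(u^t) = \int F(u) = 0$; combining with the decomposition $F(u^t) - F(u) = f(u)(u^t - u) + \int_u^{u^t}(f(s) - f(u))\, ds$ and integrating yields
\[
-\int f(u)(u^t - u)\, dx = \int \int_u^{u^t}(f(s) - f(u))\, ds\, dx =: R(t).
\]
Pólya-Szegő applied to $(\star)$ forces $\int f(u)(u^t - u) \le 0$, so $R(t) \ge 0$, and more importantly $0 \le \int (|\nabla u|^2 - |\nabla u^t|^2)\, dx \le 2R(t)$; showing $R(t) = o(t)$ as $t \to 0^+$ will therefore suffice to verify~\eqref{limite}.

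The principal technical obstacle is precisely this tail control of $R(t)$. I would split $\R^2$ according to whether $u, u^t$ lie above or below $m$. On the bounded set $\{u \ge m\} \cap \{u^t \ge m\}$, $f$ is uniformly continuous on $[m, \sup u]$ with some modulus $\omega$, and Proposition~\ref{prop:l2} applied to $G_m(u) \in H_0^1(B(R))$ gives $\|u^t - u\|_{L^2} = O(t)$ there; the pointwise bound $\bigl|\int_u^{u^t}(f(s) - f(u))\, ds\bigr| \le \omega(|u^t - u|)|u^t - u|$ then integrates to $o(t)$. On the tail where $u$ or $u^t$ is near $L$, monotonicity of $f$ on $(L, \LL)$ gives the inner integrand a consistent sign, and I would exploit $f(u), f(u^t) \in L^1$ (the latter via Cavalieri) together with the $L^2$-continuity of CSS to bound the contribution by $o(t)$. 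The case $L = -\infty$ is the most delicate, since uniform continuity of $f$ on the unbounded interval $(L, \LL)$ is not automatic and requires a finer argument exploiting both $\lim_{t \to L} f(t) = 0$ and the monotonicity of $f$. Once $R(t) = o(t)$ is established, Proposition~\ref{prop:loc:symm} concludes that $u$ is locally symmetric in every direction.
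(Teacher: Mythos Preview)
Your overall strategy coincides with the paper's: establish (i)--(ii) by testing the equation and a two-dimensional Pohozaev identity, then rewrite $\int f(u)(u^t-u)$ via Cavalieri as a remainder $R(t)$, split into a bounded region (handled by Proposition~\ref{prop:l2}) and a tail (handled by monotonicity of $f$), and invoke Proposition~\ref{prop:loc:symm}.

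You make the tail estimate harder than it needs to be. You do not have to show the tail contribution to $R(t)$ is $o(t)$; once you know (a) $f$ is non-increasing on $(L,\LL)$ (which must be shown first---the paper does this by testing against smooth approximations of $\mathbf{1}_{\{u\ge c\}}$), the tail integrand $\int_u^{u^t}(f(s)-f(u))\,ds$ is $\le 0$ wherever both $u,u^t\le\LL$, and (b) $R(t)\ge 0$ globally from P\'olya--Szeg\H{o}, you immediately get $0\le R(t)\le |R_{\text{bounded}}(t)|$. Only the bounded part needs the $o(t)$ bound, and this dissolves your worry about $L=-\infty$ and uniform continuity near $L$. To make the split clean the paper compares $u$ with the step function equal to $\sup u$ on $B(R)$ and $\LL$ outside, so that monotonicity of CSS forces $u^t\le\LL$ on $\R^2\setminus B(R)$ as well.

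Two technical points are under-justified. Your test function $\chi_k(u)$ is not compactly supported (only its gradient is), so the identity $\int\chi_k(u)f(u)=-\int_{\{m-k<u<m\}}|\nabla u|^2$ requires an argument; the paper's Lemma~\ref{lemma-u-phi} (integrate on balls $B(R_n)$ with radii chosen so that the boundary flux vanishes) does exactly this for any $\phi$ with $\nabla\phi\in L^2$. The same lemma, applied with $\phi=u$ and $\phi=u^t$, is what the paper uses to justify $\int\nabla u\cdot\nabla(u^t-u)=\int f(u)(u^t-u)$ and to show $f(u)u,\,f(u)u^t\in L^1$; your ``truncation approximation'' sketch is vaguer and would need the same ingredients.
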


Before proving Theorem~\ref{thm:rad}, we start with some preliminary results, which we believe are of independent interest.
\begin{lemma}
\label{lemma-u-phi}
    Let $u\in H^2_{\mathrm{loc}}(\R^2)$ with $\nabla u\in L^2(\R^2)$ and $\phi\in H^1_{\mathrm{loc}}(\R^2)$ with $\nabla\phi\in L^2(\R^2)$.
    Then for any sequence $(R_n)_{n\in\N}\subseteq (0,+\infty)$ such that $R_n \to +\infty$ and
\begin{equation}
\label{eq:lemma:nota}
R_n\log R_n\int_{\partial B(R_n)}\abs{\nabla u}^2\,d \sigma(x)\to0,\quad\text{as }n\to+\infty,
\end{equation}
one has
\[
\int_{ B(R_n)}(-\Delta u)\phi\,dx\to\int_{\RD}\nabla u\cdot\nabla\phi\,dx,\quad\text{as }n\to+\infty.
\]
\end{lemma}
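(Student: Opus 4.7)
The plan is to apply Green's identity on the ball $B(R_n)$ and then let $n\to\infty$, showing that the bulk term converges to the desired right-hand side while the boundary term vanishes thanks to the assumption~\eqref{eq:lemma:nota}.

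\textbf{Step 1: Green's identity on} $B(R_n)$. Since $u\in H^2_{\mathrm{loc}}(\RD)$ and $\phi\in H^1_{\mathrm{loc}}(\RD)$, standard integration by parts yields
\[
\int_{B(R_n)}(-\Delta u)\phi\,dx=\int_{B(R_n)}\nabla u\cdot\nabla\phi\,dx-\int_{\partial B(R_n)}\phi\,\partial_\nu u\,d\sigma(x),
\]
where traces are understood in the $H^{1/2}$ sense; each term is finite for every fixed $n$.

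\textbf{Step 2: Convergence of the bulk term.} Since $\nabla u,\nabla\phi\in L^2(\RD)$, Cauchy--Schwarz gives $\nabla u\cdot\nabla\phi\in L^1(\RD)$. Applying dominated convergence to $\chi_{B(R_n)}\nabla u\cdot\nabla\phi$ we obtain
\[
\int_{B(R_n)}\nabla u\cdot\nabla\phi\,dx\longrightarrow\int_{\RD}\nabla u\cdot\nabla\phi\,dx.
\]

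\textbf{Step 3: Vanishing of the boundary term.} This is the main obstacle, since $\phi$ is not assumed to be globally in $L^2$ and one needs a growth estimate for its trace on $\partial B(R_n)$. By Cauchy--Schwarz,
\[
\left|\int_{\partial B(R_n)}\phi\,\partial_\nu u\,d\sigma\right|^2\le\left(\int_{\partial B(R_n)}\phi^2\,d\sigma\right)\left(\int_{\partial B(R_n)}|\nabla u|^2\,d\sigma\right).
\]
So it suffices to show $\int_{\partial B(R)}\phi^2\,d\sigma\le C(\phi)\,R\log R$ for $R\ge 2$. Writing $\phi$ in polar coordinates and using the fundamental theorem of calculus together with Cauchy--Schwarz in the radial variable,
\[
|\phi(r,\theta)|^2\le 2|\phi(1,\theta)|^2+2\log r\int_1^r s\,|\partial_s\phi(s,\theta)|^2\,ds,
\]
so that, multiplying by $r$ and integrating in $\theta$,
\[
\int_{\partial B(r)}\phi^2\,d\sigma\le 2r\int_{\partial B(1)}\phi^2\,d\sigma+2r\log r\int_{A(1,r)}|\nabla\phi|^2\,dx\le C(\phi)\,r\log r,
\]
where the trace of $\phi$ on $\partial B(1)$ is controlled by $\|\phi\|_{H^1(B(2))}$. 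Combining this with the hypothesis~\eqref{eq:lemma:nota} gives
\[
\left|\int_{\partial B(R_n)}\phi\,\partial_\nu u\,d\sigma\right|^2\le C(\phi)\,R_n\log R_n\int_{\partial B(R_n)}|\nabla u|^2\,d\sigma\longrightarrow 0,
\]
which together with Steps 1 and 2 concludes the proof.
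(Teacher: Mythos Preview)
Your proof is correct and follows essentially the same route as the paper: integrate by parts on $B(R_n)$, handle the bulk term by dominated convergence, and kill the boundary term via Cauchy--Schwarz together with the growth estimate $\int_{\partial B(R)}\phi^2\,d\sigma\le C(\phi)\,R\log R$. The only cosmetic difference is in how this growth estimate is derived: the paper proves a slightly sharper intermediate inequality comparing $\bigl(\tfrac{1}{r}\int_{\partial B(r)}\phi^2\bigr)^{1/2}$ at two radii (via differentiating the spherical average and integrating), whereas you go more directly with the fundamental theorem of calculus along rays and Cauchy--Schwarz with weight $1/s$; both arguments are the same idea and yield the same bound.
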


\begin{remark} 
	Observe that such sequence $(R_n)_{n \in \N}$ always exists, since
	\[
	\int_{\R^2} |\nabla u|^2 \, dx = \int_0^{+\infty} \int_{\partial B(r)}  |\nabla u|^2 \, d \sigma(x) \, dr < + \infty,
	\]
	so that the map $r \mapsto \int_{\partial B_r}  |\nabla u|^2 \, d \sigma(x)$ is integrable in $(0,+\infty)$.
\end{remark}

\begin{proof}

We first claim that for any $\phi \in H^1_{loc}(\R^2)$ and for all $r_2>r_1>0$, it holds
\begin{equation}
    \label{med:phi2}
    \begin{split}
    \left | \left(\frac{1}{r_2}\int_{\partial B(r_2) } \abs{\phi}^2\,d\sigma(x)\right)^{1/2} \right. & \left.  -\left(\frac{1}{r_1}\int_{\partial B(r_1)} \abs{\phi}^2\,d\sigma(x)\right)^{1/2} \right | \\
        &\le\left(\log(r_2/r_1)\int_{A(r_1,r_2)} \abs{\nabla \phi}^2\,dx\right)^{1/2}.
    \end{split}
\end{equation}

By density arguments, and using the trace inequality in $A(r_1, r_2)$, we can assume that $\phi $ is a $\mathcal C^\infty$ function. 

For all $r \in (r_1, r_2)$, we denote the mean (up to a constant) of $\abs{\phi}^2$ on $\partial B(r)$ as $\bar \phi: [r_1, r_2 ]\to \R$,
\[
\bar\phi(r)=\frac{1}{r}\int_{\partial B(r)} \abs{\phi(x)}^2\,d\sigma(x)=\int_{\partial B(1)} \abs{\phi(ry)}^2\,d\sigma(y).
\]

Observe that if $\phi$ is $\mathcal C^\infty$, then $\bar \phi $ is $\mathcal C^\infty$. Differentiating under the integral sign and using Hölder's inequality, we have:
\begin{align*}
\abs{\bar\phi'(r)}=\left|\frac{d\bar\phi(r)}{dr}\right|&=2\left|\int_{\partial B(1)}{\phi(ry)}\nabla\phi(ry)\cdot y\,d\sigma(y)\right| \\    
    &\le 2\left(\int_{\partial B(1)} \abs{\phi(ry)}^2\,d\sigma(y)\right)^{1/2}\left(\int_{\partial B(1)} \abs{\nabla\phi(ry)}^2\,d\sigma(y)\right)^{1/2}\\
    &=2(\bar\phi(r))^{1/2}\left(\int_{\partial B(r)} \abs{\nabla\phi(x)}^2\,d\sigma(x)\right)^{1/2} \cdot r^{-1/2}.
\end{align*}
Define $A= \{r \in (r_1, r_2): \bar \phi(r) > 0\}$. For $r \in A$ we can divide both members above by $2(\bar\phi(r))^{1/2}$ and integrate:
\begin{align*}
\left|\int_{A}\frac{\bar\phi'(r)}{2(\bar\phi(r))^{1/2}}\,dr\right|&\le\int_{A}\frac{\abs{\bar\phi'(r)}}{2(\bar\phi(r))^{1/2}}\,dr\\
    &\le\int_{r_1}^{r_2}\left(\int_{\partial B(r)} \abs{\nabla\phi(x)}^2\,d\sigma(x)\right)^{1/2}r^{-1/2}\,dr\\
    &\le\left(\int_{r_1}^{r_2}\int_{\partial B(r)} \abs{\nabla\phi(x)}^2\,d\sigma(x)dr\right)^{1/2}\left(\int_{r_1}^{r_2}\frac1r\,dr\right)^{1/2}\\
    &=\left(\log(r_2/r_1)\int_{A(r_1, r_2)} \abs{\nabla \phi}^2\,dx\right)^{1/2}.
\end{align*}
On the other hand, by the Barrow's rule,
\[
\left|\int_{A}\frac{\bar\phi'(r)}{2(\bar\phi(r))^{1/2}}\,dr\right|=\left | (\bar\phi(r_2))^{1/2}-(\bar\phi(r_1))^{1/2} \right |,
\]
and the claim is proved.

\medskip 

In particular,~\eqref{med:phi2} with $r_2=R_n$ and $r_1=1$ gives
\begin{equation}
    \label{stima:palla}
\int_{\partial B(R_n)}\abs{\phi}^2\,d\sigma(x)\le C R_n\log R_n,
\end{equation}
where $C$ is a positive constant depending only on $\phi$.

Integrating $-\Delta u $ multiplied by $\phi$ over $ B(R_n)$, one has
\begin{equation}
\label{eq:lapl}
    \int_{ B(R_n)}(-\Delta u)\phi\,dx=\int_{ B(R_n)}\nabla u\cdot\nabla\phi\,dx-\int_{\partial B(R_n)} \phi \, \partial_\nu u\,d\sigma(x).
\end{equation}
By Hölder's inequality and~\eqref{stima:palla}
\[
\left|\int_{\partial B(R_n)} \phi \, \partial_\nu u\,d\sigma(x)\right|\le \left(CR_n\log R_n\int_{\partial B(R_n)}\abs{\nabla u}^2\,d\sigma(x)\right)^{1/2}\to 0,
\]
as $n\to+\infty$ by~\eqref{eq:lemma:nota}.

Finally, by the dominated convergence theorem
\[
\int_{ B(R_n)}\nabla u\cdot\nabla\phi\,dx\to\int_{\R^2}\nabla u\cdot\nabla\phi\,dx,\quad\text{as }n\to+\infty,
\]
and the claim follows, passing to the limit in~\eqref{eq:lapl}.

\end{proof}

The proof of the following lemma is based on a Pohozaev-type argument, where the boundary terms need to be treated with care.

\begin{lemma}
\label{FL1}
Let  $u \in H^2_{\mathrm{loc}}(\R^2)$ with $\nabla u\in L^2(\R^2)$  be a solution of 
\[
-\Delta u=f(u), \quad\text{in }\R^2,
\]
for some continuous function $f$. Then there exists a choice of $F$, primitive of $f$, such that
\[
\int_{B(R)}F(u)\,dx\to0,\quad\text{as }R\to+\infty.
\]
\end{lemma}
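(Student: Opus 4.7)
My plan is to derive a Pohozaev-type identity, read it as a linear first-order ODE for $\Phi(R):=\int_{B(R)} \tilde F(u)\,dx$ (where $\tilde F$ is any fixed primitive of $f$), and show that the resulting solution has the form $\Phi(R) = C R^2 + o(1)$ for some constant $C \in \R$; the conclusion then follows at once by taking the primitive $F := \tilde F - C/\pi$.

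Since $u \in H^2_{\mathrm{loc}}(\R^2)$ is continuous by the two-dimensional Sobolev embedding, $\tilde F(u) \in H^1_{\mathrm{loc}}(\R^2)$ with $\nabla \tilde F(u) = f(u)\nabla u$, and $|\nabla u|^2$ lies in $W^{1,1}_{\mathrm{loc}}(\R^2)$ as a product of $H^1_{\mathrm{loc}}$ functions in dimension two. I would therefore multiply the equation $-\Delta u = f(u)$ by $x\cdot\nabla u$, integrate over $B(R)$, and perform the usual Pohozaev-type integrations by parts (using $f(u)\,(x\cdot\nabla u) = x\cdot\nabla \tilde F(u)$) to obtain
\[
R\,\Phi'(R) - 2\Phi(R) = \frac{R}{2}\int_{\partial B(R)} |\nabla u|^2\,d\sigma - R\int_{\partial B(R)} (\partial_\nu u)^2 \, d\sigma =: E(R),
\]
for a.e.~$R>0$, where $\Phi'(R) = \int_{\partial B(R)} \tilde F(u)\,d\sigma$ by the coarea formula. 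Setting $\psi(R) := \Phi(R)/R^2$, this is the ODE $\psi'(R) = E(R)/R^3$. The crude bound $|E(r)|/r^3 \le \frac{1}{2r^2}\int_{\partial B(r)} |\nabla u|^2\,d\sigma$ together with polar coordinates then yields, for all $R_2 > R_1 > 0$,
\[
|\psi(R_2) - \psi(R_1)| \le \int_{R_1}^{R_2}\frac{|E(r)|}{r^3}\,dr \le \frac{1}{2}\int_{A(R_1,R_2)} \frac{|\nabla u|^2}{|x|^2}\,dx \le \frac{\|\nabla u\|_{L^2(\R^2)}^2}{2R_1^2}.
\]
Hence $\psi$ is Cauchy at infinity, so $C := \lim_{R\to\infty}\psi(R)$ exists in $\R$. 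Letting $R_2 \to \infty$ and using $|x|^{-2}\le R^{-2}$ on $\{|x|>R\}$ gives the sharper estimate
\[
R^2|\psi(R)-C| \le \frac{1}{2}\int_{\{|x|>R\}} |\nabla u|^2\,dx \xrightarrow{R\to\infty} 0
\]
by dominated convergence, i.e.~$\Phi(R)-CR^2 \to 0$. The primitive $F := \tilde F - C/\pi$ then satisfies $\int_{B(R)} F(u)\,dx = \Phi(R)-CR^2 \to 0$, as required.

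The main delicate point is the justification of the Pohozaev identity under only $H^2_{\mathrm{loc}}$ regularity of $u$: the boundary traces of $|\nabla u|^2$, $(\partial_\nu u)^2$ and $\tilde F(u)$ have to be interpreted in the trace sense (or via the coarea formula), so that the identity is only guaranteed for a.e.~$R>0$. This is perfectly compatible with the absolute continuity / ODE analysis above; once it is in place, the remainder of the argument is a clean exploitation of the $L^2(\R^2)$-integrability of $\nabla u$ and requires no further assumptions on $f$ beyond continuity.
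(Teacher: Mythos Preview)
Your argument is correct and is essentially the same as the paper's own proof: both derive the Pohozaev identity, rewrite it as the ODE $(\Phi(R)/R^2)' = E(R)/R^3$ with $|E(r)| \le \tfrac{r}{2}\int_{\partial B(r)}|\nabla u|^2\,d\sigma$, deduce that $\Phi(R)/R^2$ has a finite limit, and then use the tail integrability of $|\nabla u|^2$ to show the remainder is $o(1)$. Your careful remark about the identity holding only for a.e.\ $R$ under $H^2_{\mathrm{loc}}$ regularity is a welcome addition.
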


\begin{proof} 
Since $u \in H^2_{\mathrm{loc}}(\R^2)$, by standard local regularity estimates, we conclude that $u$ is locally $\mathcal C^{1, \alpha}$. Let $F$ be any primitive of $f$, we will fix a specific one later. Now, integrating over $ B(R)$ the equation $-\Delta u =f(u)$ multiplied by $\nabla u\cdot x$ one has
\begin{align*}
\int_{B(R)}-\Delta u(\nabla u\cdot x)\,dx&=\int_{B(R)}f(u)(\nabla u\cdot x)\,dx\\
	&=\int_{B(R)}\nabla(F(u))\cdot x\,dx\\
	&=\int_{\partial B(R)}F(u)x\cdot\nu\,d\sigma(x)-2\int_{B(R)}F(u)\,dx\\
	&=R\int_{\partial B(R)}F(u)\,d\sigma(x)-2\int_{B(R)}F(u)\,dx.
\end{align*}
On the other hand
\[
\Delta u(\nabla u\cdot x)=\mathrm{div}\left((\nabla u\cdot x)\nabla u-\frac12\abs{\nabla u}^2x\right).
\]
Thus
\begin{align*}
\int_{B(R)}-\Delta u(\nabla u\cdot x)\,dx&=-\int_{\partial B(R)}(\nabla u\cdot x)\partial_\nu u\,d\sigma(x)+\frac12\int_{\partial B(R)}\abs{\nabla u}^2x\cdot\nu\,d\sigma(x)\\
	&=-R\int_{\partial B(R)}\abs{\partial_\nu u}^2\,d\sigma(x)+\frac R2\int_{\partial B(R)}\abs{\nabla u}^2\,d\sigma(x)\\
	&=-\frac R2 \int_{\partial B(R)}\left(\abs{\partial_\nu u}^2-\abs{\partial_\theta u}^2\right)\,d\sigma(x),
\end{align*}
where $\partial_\theta u$ denotes the tangential derivative of $u$ on $\partial B(R)$.
Then
\begin{align*}
2\int_{B(R)}F(u)\,dx=R\int_{\partial B(R)}F(u)\,d\sigma(x)+\frac R2 \int_{\partial B(R)}\left(\abs{\partial_\nu u}^2-\abs{\partial_\theta u}^2\right)\,d\sigma(x).
\end{align*}
Let us set
\[
\Psi(R)=\int_{B(R)}F(u)\,dx, \quad  h(R)=\frac12 \int_{\partial B(R)}\left(\abs{\partial_\nu u}^2-\abs{\partial_\theta u}^2\right)\,d\sigma(x).
\]
Note that $h\in L^1(0,+\infty)$.
Taking into account that
\[
\Psi'(R)=\int_{\partial B(R)}F(u)\,d\sigma(x),
\]
one has
\[
\left[\frac{\Psi(R)}{R^2}\right]'=\frac{\Psi'(R)R-2\Psi(R)}{R^3}=-\frac{h(R)}{R^2},
\]
which is integrable in $(1,+\infty)$ and then
\[
\frac{\Psi(R)}{R^2}\to \ell,\quad\text{as }R\to+\infty,
\]
for some $\ell\in \R$. We now add an appropriate  constant to the function $F$ so that $\ell=0$ and this allows us to write
\begin{align*}
\abs{\Psi(R)}&=R^2\left|\frac{\Psi(R)}{R^2}\right|\\
	&=R^2\left|\int_R^{+\infty}\left[\frac{\Psi(r)}{r^2}\right]'\,dr\right|\\
	&\le R^2\int_R^{+\infty}\frac{\abs{h(r)}}{r^2}\,dr\\
	&\le\int_R^{+\infty}\abs{h(r)}\,dr\to0,\quad\text{as }R\to+\infty,
\end{align*}
since $h\in L^1(0,+\infty)$.

\end{proof}

We are now ready to prove the main result of this section, Theorem~\ref{thm:rad}.

\bigskip

\begin{proof}[Proof of Theorem~\ref{thm:rad}]

Since $u \in H^2_{\mathrm{loc}}(\R^2)$, by standard local regularity estimates, we conclude that $u$ is locally $\mathcal C^{1, \alpha}$. The proof is developed in several steps.

\medskip

\noindent {\bf Step 1. Proof of statement (i).}

\medskip

By hypothesis, $f$ is monotone in $(L, \LL)$, which implies that there exists
 \[
 \lim_{t \to L} f(t) = \beta \in [-\infty, +\infty].
 \]
 By Lemma~\ref{lemma-u-phi} with $\phi\equiv1$ one has
 \begin{equation}
 	\label{eq:2342134531}
 	\int_{ B(R_n)}f(u)\,dx\to0,\quad\text{as }n \to+\infty,
 \end{equation}
 for some $R_n\to+\infty$ as $n \to+\infty$. This immediately implies that $ \beta =0$.
 
\medskip
 
We claim that $f$ is nonpositive and non-increasing in $(L, \LL)$. In order to prove it, let us recall that $\nabla u \in L^2(\R^2)$ and $u \in \mathcal C^{1, \alpha}_{\mathrm{loc}}(\R^2)$ weakly solves~\eqref{eq:weaksemilihelpeq}, i.e.
\[
\int_{\R^2} \nabla u \cdot \nabla \varphi \, dx = \int_{\R^2} f(u) \varphi \, dx, \qquad \text{for all } \varphi \in \mathcal C^{1}_c(\R^2).
\]
Now, take $c >L$ arbitrary; for every $\varepsilon >0$ let $g_\varepsilon \in \mathcal C^\infty(\R)$ be such that $g_\varepsilon(t)=1$ if $t \geq c$, $g_\varepsilon(t)=0$ if $t \leq c-\varepsilon$, and $g_\varepsilon'(t) \geq 0$ for any $t \in \R$.
Let us consider $\varphi=g_\varepsilon(u)$ that belongs to $\mathcal C^{1}_c(\R^2)$ if $\varepsilon$ is sufficiently small. Hence, plugging it as test function and taking into account the monotonicity of the function $g_\varepsilon$ we obtain:
\[
0 \leq \int_{\R^2} |\nabla u|^2g'_\varepsilon(u) \, dx = \int_{\R^2} f(u) g_\varepsilon(u) \, dx.
\]
By the dominated convergence theorem, we conclude that
\[
0 \leq \int_{\{u \geq c\}} f(u) \,dx, \quad \text{for any } c>L.
\]

Fix now $c \in (L, \LL)$. Taking into account~\eqref{eq:2342134531} and the monotonicity of $f$, we conclude that $f(t)\leq 0$ for all $ t \in (L, c)$, and this concludes the proof of the claim.
 
Since $f$ is nonpositive in $(L, \LL)$, we have that $f(u(x)) \leq 0 $ for all $x \in \R^2 \setminus B(R)$, for sufficiently large $R>0$. Again by~\eqref{eq:2342134531} we conclude that $f(u) \in L^1(\R^2)$ and
\[
\int_{\R^2} f(u) \, dx =0.
\]
 
\medskip

\noindent {\bf Step 2. Proof of statement (ii).}
 
 \medskip
 
 We can argue analogously with $F(u)$: indeed, since $f(t) \leq 0$ if $t \in(L, \LL)$ then $F(t)$ is non-increasing in $(L, \LL)$ and admits a limit as $t \to L$. We choose a constant in the definition of $F(t)$ so that Lemma~\ref{FL1} holds, that is,
 \begin{equation}
 \label{otramas} \int_{B(R)} F(u) \, dx \to 0, \quad \mbox{as } R \to +\infty.
 \end{equation}
 As a consequence, $ \lim_{t \to L} F(t)=0$. As $f$ is nonpositive in $(L, \LL)$, we conclude that $F$ is nonpositive and integrable in $(L, \LL)$, and $F(t)= \int_L^t f(s) \, ds$. In particular, $F(u(x))$ is nonpositive in $\R^2\setminus B(R)$; by~\eqref{otramas}, $F(u)\in L^1(\R^2)$ and 
 \[
 \int_{\R^2} F(u) \, dx =0.
 \]
 
\medskip

\noindent {\bf Step 3. Proof of the local symmetry in every direction.}

\medskip

In this step we make use of the continuous Steiner symmetrization, which  was introduced in Section~\ref{sec:brock}. First, some preliminary observations are in order.

Fix $\eta \in \R^2$, $|\eta|=1$, and denote by $u^t$ the continuous Steiner symmetrization of $u$ in the direction $\eta$. Applying Lemma~\ref{lemma-u-phi} with $\phi=u$, respectively $\phi=u^t$, we have that 
\[
 \int_{B(R_n)} f(u) u \, dx \to \int_{\R^N} |\nabla u|^2 \, dx,  \quad \text{and} \quad  \int_{B(R_n)} f(u) u^t \, dx \to \int_{\R^N} \nabla u \cdot \nabla u^t \, dx,
 \]
for some sequence $R_n \to + \infty$ (note that $\nabla u^t \in L^2(\RD)$ by Proposition~\ref{lemma:polya}). Observe that we also have
\[
 \lim_{|x| \to + \infty} u^t(x) =L, \quad \text{and} \quad  u^t > L  \quad \text{in }  \R^2.
 \]
As a consequence the functions $u$, $u^t$ have constant sign in the exterior of a ball $B(R)$. In particular, the functions $f(u)u$, $f(u)u^t$ have constant sign in the exterior of a ball, and hence they belong to $L^1(\R^2)$.\\
\\
Now, for any $t\ge0$, we set:
\[
J(t)=\int_{\R^2}f(u)(u^t-u)\,dx.
\]
We start by showing the following claim, where we control the right derivative at $0$ of the functional $J$ in terms of $t$.\\

\noindent\emph{Claim: there holds
	\begin{equation}
		\label{derJ}
		\liminf_{t\to0^+}\frac{J(t)}{t}\ge0.
\end{equation}}\\
Since $F(u)\in L^1(\R^2)$, by Cavalieri's principle, (4) in Proposition~\ref{prop:steiner}, we have that $F(u^t) \in L^1(\R^2)$ and
\[
0=\int_{\R^2}\left(F(u^t)-F(u)\right)\,dx=\int_{\R^2}\left(\int_0^1f(u+s(u^t-u))(u^t-u)\,ds\right)\,dx.
\]
It is important to observe that, a priori, it is not clear if the expression $f(u+s(u^t-u))(u^t-u)$ is Lebesgue integrable in the variables $x$, $s$. Still, the expression above is well defined. Then,
\begin{align*}
J(t)&=\int_{\R^2}f(u)(u^t-u)\,dx\\
&=\int_{\R^2}\left(\int_0^1\left(f(u)-f(u+s(u^t-u))\right)(u^t-u)\,ds\right)\,dx=J_1(t)+J_2(t),
\end{align*}
where
\begin{gather*}
J_1(t)=\int_{B(R)}\left(\int_0^1\left(f(u)-f(u+s(u^t-u))\right)(u^t-u)\,ds\right)\,dx,\\
J_2(t)=\int_{\R^2\setminus B(R)}\left(\int_0^1\left(f(u)-f(u+s(u^t-u))\right)(u^t-u)\,ds\right)\,dx,
\end{gather*}
and $R>0$ is such that $u< \LL$ in $\R^2\setminus B(R)$. Then, we can write $u \leq \chi$, where $\chi$ is defined:
\[
\chi(x)= \left \{ \begin{array}{ll} \sup \ u & x \in B(R), \\ \LL & x \in  \R^2\setminus B(R).  \end{array} \right.
\]
By the monotonicity property (3) in Proposition~\ref{prop:steiner}, one has $u^t\le \chi^t=\chi$, which implies that $u^t \leq \LL$ in $\R^2\setminus B(R)$.
Thus, since $f$ is non-increasing in $(L, \LL)$ as shown in Step 1, we immediately get that
\[
\left(f(u)-f(u+s(u^t-u))\right)(u^t-u)\geq 0,\quad\text{in }(\RD \setminus B(R)) \times (0,1).
\]
In particular, this fact implies $J_2(t)\ge0$ (a priori $J_2(t)$ could be $+\infty$).

\medskip 
Hence, to prove claim~\eqref{derJ}, it is enough to show that
\[
\lim_{t\to0^+}\frac{J_1(t)}{t}=0.
\]
By boundedness we have $\left(f(u)-f(u+s(u^t-u))\right)(u^t-u)\in L^1(B(R) \times (0,1))$. Thus, by Fubini Theorem and H\"older's inequality we have
\begin{align*}
J_1(t)&= \int_0^1 \left(\int_{B(R)}\left(f(u)-f(u+s(u^t-u))\right)(u^t-u)\,ds\right)\,dx\\
&\le\int_0^1\|u^t-u\|_{L^2(B(R))}\|f(u)-f(u+s(u^t-u))\|_{L^2(B(R))}\,ds.
\end{align*}
Now, we take $m\in (L,\min_{B(R)}u)$. Observe that  
\[
u^t - u = G_m(u^t) - G_m(u) = G_m(u)^t - G_m(u) \ \mbox{ in } B(R),
\]
where $G_m$ is defined in~\eqref{def:Gn}.

Moreover, for $\bar{R}>R$ suitably large, $G_m(u) \in H^1_{0}( B(\bar{R}))$ and is nonnegative. We can use Proposition~\ref{prop:l2} to get
\begin{align*}
\|u^t-u\|_{L^2(B(R))}&=\|G_m(u)^t - G_m(u)\|_{L^2( B(R))}\\
    &\le\|G_m(u)^t - G_m(u)\|_{L^2( B(\bar{R}))}\\
	&\le t\,\bar{R}\|\nabla u\|_{L^2(\R^2)}.
\end{align*}

\medskip 

Since $u^t\to u$ in $L^2(B(R))$, we have also convergence a.e.~up to subsequences and since $f$ is continuous
\[
\|f(u)-f(u+s(u^t-u))\|_{L^2(B(R))}\to0,\quad\text{as }t\to0^+,
\]
for any $s\in(0,1)$. The claim~\eqref{derJ} follows.

\medskip

To conclude the proof, we apply Lemma~\ref{lemma-u-phi} with $\phi=u^t-u$ to get
\[
\int_{ B(R_n)}f(u)(u^t-u)\,dx\to\int_{\R^2}\nabla u\cdot\nabla(u^t-u)\,dx,\quad\text{as }n\to+\infty,
\]
and being $f(u)u, f(u)u^t\in L^1(\R^N)$ we can pass to the limit
\[
\int_{\R^2}\nabla u\cdot\nabla(u^t-u)\,dx=\int_{\R^2}f(u)(u^t-u)\,dx=J(t).
\]
By convexity
\[
J(t)=\int_{\R^2}\nabla u\cdot\nabla(u^t-u)\,dx\le \frac 1 2 \int_{\R^2}\big(\abs{\nabla u^t}^2-\abs{\nabla u}^2\big)\,dx,
\]
and together with Proposition~\ref{lemma:polya} and claim~\eqref{derJ} we deduce
\[
0\ge\lim_{t\to0^+}\frac{\displaystyle \int_{\R^2}\big(\abs{\nabla u^t}^2-\abs{\nabla u}^2\big)\,dx}{ 2t}\ge\liminf_{t\to0^+}\frac{J(t)}{t}\ge0.
\]
Since the direction $\eta$ is arbitrary we can use Proposition~\ref{prop:loc:symm} to conclude that $u$ is locally symmetric in any direction.

\end{proof}

\section{ Proof of Theorem~\ref{main:thm}} \label{sec:teo2}

The proof of Theorem~\ref{main:thm} readily follows from Theorem~\ref{thm:rad}. Indeed, observe that ${\bf S} \neq \R^2$ because of hypothesis~\eqref{H1}. Then we are in conditions to apply Theorem~\ref{main:thm0}. 

Observe that $ \omega = - f(u)$: by~\eqref{H1} there exists $\LL>L$ such that $f(t) $ is monotone in $(L, \LL)$. We are then under the hypotheses of Theorem~\ref{thm:rad} and hence $u$ is locally symmetric in any direction, in the sense of Definition~\ref{locallyrad}, that is,
\[
\R^2\setminus\mathbf S=\bigcup_{k\in K}A_k,
\]
where
\begin{enumerate}
	\item $K$ is countable,
	\item $A_k$ are disjoint open annuli $A_{q_k}(r_k,R_k)$, ($0 \leq r_k < R_k \leq +\infty$) and $u$ is radially symmetric and strictly radially decreasing in each domain $A_k$.
\end{enumerate}	

In particular, $\partial A_k \in \mathbf S$ for all $k$. This implies in particular that $R_k = +\infty$ (otherwise we have two connected components of $\partial A_k$). Since the annuli are disjoint, we only have one annulus, that is, $\R^2 = A \cup \mathbf S$, with $A=A_{q}(r, +\infty)$, for some $q \in \R^2$ and $r \geq 0$. 

Observe now that $\omega = - f(u)$ and $B = p + \frac 1 2 |{v }|^2 = -F(u)$. From the statements i) and ii) of Theorem~\ref{thm:rad} we conclude that $\omega$, $B$ and $p$ belong to $L^1(\R^2)$ (for a suitable choice of the constant). Moreover:
\[
\lim_{|x| \to + \infty} \omega(x)= \lim_{|x| \to + \infty} B(x)=0, \quad \text{and} \quad 
 \int_{\R^2} \omega(x) \, dx = \int_{\R^2} B(x) \, dx=0.
\]
We only need to show that 
\[ \lim_{|x| \to +\infty} |{v}(x)|= 0.
\] 
Observe that, up to a translation, $u$ is a radial function $u(r)= u(|x|)$ satisfying:
\[
u''(r) + u'(r)/r +f (u(r))=0.
\]
Multiplying by $u'(r)$ we then obtain that
\[
\left( \frac 1 2 u'(r)^2 + F(u(r))  \right)'= -u'(r)^2/r \leq 0.
\]
Hence,  in this case, the function $p(r) = -\frac 1 2 \, u'(r)^2 -F(u(r)) $ is non-decreasing in $r$, and as a consequence it has a limit at infinity. Since $\lim_{r \to + \infty} F(u(r))=0$ and $\nabla u \in L^2(\R^2)$, we conclude that $\lim_{r \to + \infty} u'(r) =0$.

\bigskip

\subsection*{Data availability statement} This manuscript has no associated data.

\subsection*{Conflict of interest statement} The authors declare that there is no conflict of interest.

\bibliographystyle{abbrv}
\bibliography{DeR-E-R.bib}

\end{document}